\DeclareMathOperator*{\argmin}{Argmin}
\newcommand{\prt}[1]{\left( #1 \right)}
\newcommand{\brkt}[1]{\left[ #1 \right]}
\newcommand{\crochets}[1]{\left\lbrace #1 \right\rbrace}
\newcommand{\ind}[1]{\mathds{1}_{#1}}
\newcommand{\esp}[1]{\mathbb{E}\brkt{#1}}
\newcommand{\esps}[2]{\mathbb{E}_{#1}\brkt{#2}}
\newcommand{\abs}[1]{\left\vert #1 \right\vert}
\newcommand{\Pb}[1]{\mathcal{P}\prt{#1}}
\newcommand{\R}{\mathbb{R}}
\newcommand{\Rm}{\mathcal{R}}
\newcommand{\F}{\mathcal{F}}
\newcommand{\G}{\mathcal{G}}
\newcommand{\N}{\mathbb{N}}
\renewcommand{\P}{\mathbb{P}}
\newcommand{\Q}{\mathbb{Q}}
\renewcommand{\S}{\mathcal{G}}
\newcommand{\cref}[1]{\hyperlink{#1}{$\prt{\mbox{#1}}$}}
\newcommand{\clabel}[1]{\hypertarget{#1}{$\prt{\mbox{#1}}$}}
\newcommand{\eqnref}[1]{Eq.~\eqref{#1}}
\newcommand{\thref}[1]{Theorem~\ref{#1}}
\newcommand{\sctref}[1]{Section~\ref{#1}}
\newcommand{\ssref}[1]{Subsection~\ref{#1}}
\newcommand{\Psh}{\Pb{\S,h}}
\renewcommand{\Rsh}{\Rm\prt{{\S,h}}}
\newcommand{\risk}{R}
\newcommand{\emp}{r}
\newcommand{\empsar}{r_{SAR}}
\newcommand{\Emp}{\widehat{R}_{n}}
\newcommand{\Empscar}{\widehat{R}_{n}^{\small SCAR}}
\newcommand{\Empscarp}{\widehat{R^\prime}_{n}^{\small SCAR}}
\newcommand{\Empsar}{\widehat{R}_{n}^{\small SAR}}
\newcommand{\Empc}{\overline{R_n}}
\newcommand{\Empsarc}{\overline{R_n^{\small SAR}}}
\newcommand{\Class}{Y}
\newcommand{\Lab}{S}
\newcommand{\hp}{h^\prime}
\newcommand{\Rshp}{\Rm\prt{{\S,\hp}}}
\newcommand{\cf}{cf.}
\newcommand{\ie}{\textit{i.e.}}
\begin{document}

\title{Risk bounds for PU learning under Selected At Random assumption}

\author{\name Olivier COUDRAY \email olivier.coudray@universite-paris-saclay.fr \\
       \addr Stellantis, Centre d'Expertise Métier et Région,
       Poissy, 78300, France\\
       \addr Université Paris-Saclay, CNRS, Inria,
       Laboratoire de mathématiques d’Orsay,
       Orsay, 91405, France
       \AND
       \name Christine KERIBIN \email christine.keribin@universite-paris-saclay.fr \\
       \addr Université Paris-Saclay, CNRS, Inria,
       Laboratoire de mathématiques d’Orsay,
       Orsay, 91405, France
       \AND
       \name Pascal MASSART \email pascal.massart@fondation-hadamard.fr\\
       \addr Université Paris-Saclay, CNRS, Inria,
       Laboratoire de mathématiques d’Orsay,
       Orsay, 91405, France
       \AND
       \name Patrick PAMPHILE \email patrick.pamphile@universite-paris-saclay.fr \\
       \addr Université Paris-Saclay, CNRS, Inria,
       Laboratoire de mathématiques d’Orsay,
       Orsay, 91405, France
       }

\editor{}

\maketitle

\begin{abstract}
Positive-unlabeled learning (PU learning) is known as a special case of semi-supervised binary classification where only a fraction of positive examples are labeled. The challenge is then to find the correct classifier despite this lack of information. Recently, new methodologies have been introduced to address the case where the probability of being labeled may depend on the covariates. In this paper, we are interested in establishing risk bounds for PU learning under this general assumption. In addition, we quantify the impact of label noise on PU learning compared to standard classification setting. Finally, we provide a lower bound on minimax risk proving that the upper bound is almost optimal.
\end{abstract}

\begin{keywords}
  Semi-supervised classification, Label noise, PU learning, Risk bounds.
\end{keywords}

\section{Introduction}

Classic binary classification is a supervised machine learning task in which, from training observations with given \emph{classes} (\emph{positive} or \emph{negative}), one seeks to predict the class of new data. However, in many realistic situations, the observed \emph{classes} can be noisy. A case in point is when the class assignment is subject to errors.
In this paper, we are interested in a special case of label noise, occurring when  a fraction of positive instances is labeled and none of the negative instances are. The \emph{unlabeled} instances are either positive or negative: their class is unknown.

\par This can be seen as a semi-supervised classification setting because only a fraction of the observations is labeled. This semi-supervised classification task is called \emph{Positive-Unlabeled Learning} (\emph{PU learning}). 
The objective of PU learning is to build classifiers that find the right class (positive or negative) of a new data point given a training dataset of positive and unlabeled observations.


PU learning is used in situations where it is difficult or costly to obtain or identify reliable negative instances.
For example, in the diagnosis of a disease, given the incubation period of the disease, a patient with a negative test may still be carrying the disease \citep[\cf][]{Chen2020Jun}.
PU learning approach could be useful in \emph{fatigue design of structures} in mechanics where testing can prove the presence of design flaws on a mechanical part, but cannot prove its absence. In the automotive industry, fatigue tests are performed to determine if a part is critical: if a crack is observed before the end of the test then the part is declared critical. But if no crack is observed it does not mean that the part is not critical. It may be possible to observe a crack by extending the test \citep[\cf][]{coudray:hal-03483277}.
Other applications of PU learning exist in \emph{spam review detection} \citep[\cf][]{Li2014Dec, FusilierDonato2015Jul,  He2020Feb}, \emph{text classification} \citep[\cf][]{Liu2002, liu2003building}, \emph{gene disease identification} \citep[\cf][]{ 10.1093/bioinformatics/bts504, Yang2014May, Nikdelfaz2018May} and \emph{anomaly detection} \citep[\cf][]{Ferretti2014Sep, Luo2018Oct, Jiang2018Nov}. 

PU learning is therefore much more difficult than learning from fully labeled data. The situation is asymmetric as one usually wants to understand the positive class in contrast with an unidentified negative class. The number of positive labeled examples is critic as it consists in the only reliable information.

Different methodologies have been developed to address PU learning. A first class of heuristic methods proceeds in two steps. The first step consists in identifying reliable negative instances among the unlabeled observations: various methods exists like \emph{Spy} \citep[\cf][]{Liu2002} or \emph{Rocchio} \citep[\cf][]{li_learning_2003} methods. In the second step, a standard supervised or semi-supervised classification method is used to  build the PU classifier from the positive labeled instances, reliable negative instances and the remaining unlabeled ones.  A typical choice is Support Vector Machine (SVM). Some methods repeat iteratively both steps until convergence. \citet{Bekker2020Apr} gave an exhaustive list of existing methods for both steps. These methods are supported by good empirical results, but theoretical guarantees are not discussed. 

Another class of methodologies resorts to a modeling of label noise and adapts existing supervised classification methods to PU learning setting. 
Most existing PU learning methods in this category assume that the probability for a positive instance to be labeled is constant and thus independent from the covariates. This situation is called Selected Completely At Random (SCAR). However, in certain cases, the probability of a positive instance being labeled is influenced by its covariates. For example, in the diagnosis of a disease, a carrier of the disease with symptoms is more likely to see a doctor and be diagnosed than a carrier who is asymptomatic. This situation with a selection bias is called Selected At Random (SAR).
Under the SCAR assumption, since the noise for positive instances is constant, the probability for an instance to be labeled is then proportional to the probability for it to be positive: fully labeled classification and PU learning are then connected. Hence, some algorithms use this property to derive consistent classifiers:  \cite{blanchard2010semi} use Neyman-Pearson classification and \cite{Plessis2014Jan} rewrite PU learning as a cost-sensitive binary classification that can be solve through empirical risk minimization. These approaches are supported by theoretical guaranties: consistency and risk bounds. 
As mentionned above, SCAR assumption is unlikely to hold in many practical situations.
Recently, several publications have addressed PU learning when the probability of being labeled is instance-dependent \citep{Bekker2018Sep, gong_instance-dependent_2021}.

From a theoretical point of view, risk bounds in standard classification setting have been extensively studied in the literature. The convergence rate of the excess risk in classification is known to be less than a quantity proportional to $\sqrt{1/n}$ where $n$ is the size of the training set \citep[\cf][]{lugosi2002}. In addition, this rate can be refined, reaching $1/n$ in the noiseless situation \citep{Massart2006Oct}. Finally, these rates are proved to be optimal in the minimax sense \citep[\cf][]{lugosi2002, Massart2006Oct}. 

Missing labels in PU learning can arise from different settings. In the \emph{two-sample setting}, the positive and unlabeled instances are sampled separately and are therefore not identically distributed: it is a \emph{case-control} situation. In the \emph{one-sample setting}, all the instances are i.i.d and some positive instances are labeled.
In the past few years, several papers have studied excess risk upper bounds for PU learning classifiers in the case-control setting. In this case, \cite{Plessis2014Jan} showed a convergence rate in $\mathcal{O}\prt{\sqrt{1/n_L} + \sqrt{1/n_U}}$. More recently, \cite{Bekker2018Sep} and \cite{gong_instance-dependent_2021} studied theoretical properties of PU learning under selection bias with specific assumptions: the former establishes an upper bound on an empirical risk minimizer under partial knowledge of the labeling mechanism, the latter focus on a parametric model.

In this work, we focus on PU learning in the one-sample setting.
We provide a theoretical study of PU learning under selection bias, meaning that the label noise due to PU learning is instance-dependent (Selected At Random assumption). Unlike \cite{gong_instance-dependent_2021}, we do not make parametric assumptions. 
Contrary to \cite{Bekker2018Sep} who focused on the deviations between PU learning empirical risk and fully supervised empirical risk, we provide an upper bound on the excess risk. The novelty of this result also lies in its ability to explicitly quantify the impact of label noise and to show that fast convergence rates can be achieved under noise conditions similar to \cite{Massart2006Oct}. Finally, we discuss the optimality of this result by identifying a lower bound on the minimax risk.

The paper is organized as follows. In \sctref{context}, we define the standard binary classification setting and recall some existing risk bounds. In \sctref{unbiased}, we move to PU learning setting, discuss the bias issue with labeled-unlabeled classification and introduce an unbiased empirical risk. In \sctref{results}, we present the main results of this paper: a general upper bound on the excess risk for PU learning under instance-dependent label noise, and a lower bound on minimax risk. In \sctref{conclusion}, we conclude and discuss some future perspectives.

\section{Standard classification setting}\label{context}

In this section, we introduce the standard classification setting and recall risk bounds results. This will be the opportunity to introduce general notations used throughout the paper.

\subsection{General setting}

Let $(X_1, \Class_1),...,(X_n, \Class_n)$ be independent couples of random variables in $\R^d\times \{0,1\}$ identically distributed according to some unknown distribution denoted $\P$. For each $i$, $X_i$ is a \emph{covariate} vector with marginal distribution $\P_X$ and $\Class_i$ is the \emph{class}, either \emph{negative} ($\Class_i=0$) or \emph{positive} ($\Class_i=1$). Let $\alpha=\P\prt{\Class=1}$ denote the class prior. Using $\P_0$ ($\P_1$) the conditional distribution of $X$ given that the class is negative, $\Class=0$ (positive, $\Class=1$), we write the convenient decomposition:
\begin{equation}\label{decomp}
    \P_X = (1-\alpha)\P_0 + \alpha \P_1\ .
\end{equation}

In classification, the goal is to find a classifier, \textit{i.e.} a binary function $g:\R^d \rightarrow \crochets{0,1}$, minimizing some risk function $\risk$. In this paper, $\risk$ will denote the misclassification risk:
\begin{equation}\label{risk}
    \risk\prt{g} = \P\prt{g(X)\neq \Class}\ .
\end{equation}
Given the regression function $ \eta(x) = \P\prt{\Class=1 \vert X=x} $, the minimizer of misclassification risk is Bayes classifier $g^*$ that depends explicitely on $\P$:
$$ g^*(x) = \ind{\eta(x)\geq \frac{1}{2}}. $$

In order to assess how close a given classifier $g$ is to the optimal one $g^*$, we are interested in the excess risk $\ell\prt{g, g^*}$:
$$ \ell\prt{s, g^*} = \risk\prt{g} - \risk\prt{g^*}. $$

Since $\P$ is unknown, neither $g^*$ nor the risk function $\risk$ can be computed. We rely instead on the training sample $(X_1, \Class_1),...,(X_n, \Class_n)$ to build a classifier $\widehat{g}$. Let $\emp\prt{g, \prt{X,\Class}} = \ind{g(X)\neq \Class}$ the misclassification error for one observation, the true risk $\risk$ can be estimated by the empirical mean:
\begin{equation*}
    \Emp\prt{g} = \frac{1}{n}\sum_{i=1}^n \emp\prt{g, \prt{X_i,\Class_i}}\ .
\end{equation*}

An empirical classifier $\widehat{g}$ is then identified as a minimizer of the empirical risk over a predefined class of classifiers $\S$.
$$ \widehat{g} \in \argmin_{g \in \S}\ 
\Emp\prt{g}. $$
This procedure is known as Empirical Risk Minimization. Let $g^{\S}$ be the minimizer of the true risk $\risk$ over $\S$. The excess risk of the classifier $\widehat{g}$ can be decomposed as follows:
\begin{equation}
    \ell(\widehat{g}, g^*) = \prt{\risk\prt{g^{\S}} - \risk\prt{g^*}} + \prt{\risk\prt{\widehat{g}} - \risk\prt{g^{\S}}}
\end{equation}
where the first term is the approximation error depending on $\S$ and the second one is the statistical error. Since we are only interested in assessing the statistical error, we assume that Bayes classifier $g^*$ belongs to $\S$, hence the first term vanishes. Note that $\ell\prt{\widehat{g}, g^*}$ depends on $\P$ (through the risk $\risk$) and on the training sample $(X_1, \Class_1),...,(X_n, \Class_n)$.

\subsection{Risk bounds in standard classification}\label{riskstd}
In order to assess the convergence rate of the excess risk $\ell\prt{\widehat{g}, g^*}$ in a non-asymptotic framework, we need an upper bound on $\esp{\ell\prt{\widehat{g}, g^*}}$. Note that the expectation is taken with respect to the distribution of the training sample $\P^{\otimes n}$. And the upper bound needs to be uniform over a set of distributions $\P$.
We introduce $\Pb{\S}$ a set of probability distributions on $\R^d \times \{0,1\}$ such that $g^*$ belongs to $\S$.
In this case, \cite{lugosi2002} proved that for some absolute constant $C_1 > 0$:
\begin{equation}\label{general-upper}
 \sup_{\P \in \Pb{\S}} \esp{\ell\prt{\widehat{g}, g^*}} \leq C_1 \sqrt{\frac{V}{n}}\ ,
\end{equation}
where $V$ is the \emph{Vapnik-Chervonenkis dimension} of $\S$ (VC dimension, see \citealp[Chapter~3]{vapnik1999}).
We recall that the VC dimension is the maximum integer $V$ such that there exists $V$ points $x_1,...,x_V$ in $\R^d$ \emph{shattered} by $\S$, namely classified in every way possible by elements of $\S$. In other words: $$V = \sup_{v \in \N^*}\crochets{v\ s.t.\ \exists\, x_1,...,x_v \in \R^d,\ \abs{\crochets{\prt{g(x_1),...,g(x_v)},\ g \in \S}} = 2^v}\ .$$
Note that $V$ measures the complexity of class $\S$ and has to be finite for \eqnref{general-upper} to be meaningful, which we assume for the rest of the paper.

The upper bound in \eqnref{general-upper} remains true regardless of the form of the regression function $\eta$. Actually, $\eta$ is closely linked to the \emph{label noise}: when $\eta(x)$ is close to $1/2$, the observed class can be positive or negative with probability close to $1/2$, which makes the classification of $x$ more difficult.
Hence, the closer $\eta$ is to $1/2$, the noisier the observed class is. Moreover \cite{Massart2006Oct} showed that whenever $\eta(x)$ is uniformly and symmetrically bounded away from $1/2$ by a quantity $h>\sqrt{V/n}$, the upper bound on the risk excess can be improved. 
Let $\Psh$ denote the subset of probability distributions in $\Pb{\S}$ such that for every $x \in \R^d$, $\abs{2 \eta(x) - 1}\geq h$. \cite{Massart2006Oct} showed that there exists an absolute constant $C_2 > 0$ such that:
\begin{equation}\label{massart-upper}
 \sup_{\P \in \Psh} \esp{\ell\prt{\widehat{g}, g^*}} \leq C_2 \frac{V}{n h}\prt{1+\log\prt{\frac{nh^2}{V}}}\ .
\end{equation}
Hence, as $h$ gets higher, the label noise gets smaller, and the convergence rate can be improved up to $1/n$ letting aside the logarithm. However, when $h$ is smaller than $\sqrt{1/n}$, \eqnref{general-upper} remains better. \eqnref{massart-upper} provides a fine control on the excess risk depending on the difficulty of the classification task, accounted through $h$.

A lower bound was obtained by \cite{lugosi2002}, extended by \cite{Massart2006Oct}, allowing to prove the optimality of the convergence rates. Note that optimality of the refined bound \eqnref{massart-upper} is up to the logarithmic term.

\section{PU learning context}\label{unbiased}

In standard classification setting, the classes $(\Class_i)_{1\leq i\leq n}$ are observed. This is no longer the case in PU learning where only an incomplete set of positive data is available, the remaining is unlabeled. For each $i$, the observed label $\Lab_i$ is $1$ if the class $\Class_i$ is positive and \emph{selected} (\textit{i.e.} labeled). Otherwise, the label $\Lab_i$ is $0$. The true classes are affected by a class-dependent (thus asymmetric) label noise. The probability for a positive instance to be labeled is generally called the \emph{propensity} \citep{Bekker2020Apr} and it may depend on the covariates:
\begin{equation} 
e(x) = \P\prt{\Lab=1\vert \Class=1, X=x}.
\end{equation}
Negative instances are never labeled: 
\begin{equation*}
\P\prt{\Lab=1\vert \Class=0, X=x} = 0\ .
\end{equation*}
Note that the regression function associated with $S$, $\tilde{\eta}(x) = \P\prt{S=1\vert X=x}$ depends on this additional label noise:
\begin{equation}\label{eta2}
    \tilde{\eta}(x) = e(x)\,\eta(x)\ .
\end{equation}
The objective of PU learning is to use the incomplete information $(X_1, S_1),...(X_n,S_n)$ to build a classifier able to predict the class $\Class$ given a new instance with covariates $X$.

This concept of completely asymmetric label noise was first pointed out by \cite{elkan_learning_2008}. It is now common to define two general types of assumptions: Selected Completely At Random (SCAR) and Selected At Random (SAR).
\begin{enumerate}
\item[SCAR:] PU learning without selection bias. The propensity $e(x)=e$ does not depend on the covariates $x$. This applies in situations where any instance has an equal probability to be selected (labeled). In this case, the conditional distributions of $X$ given $\Class=1$ ($\P_1$) and given $S=1$ ($\tilde{\P}_1$) are the same. In other words, labeled instances are a representative sub-sample of positive instances.
\item[SAR:] PU learning with selection bias. The probability for an instance to be selected depends on its covariates. Hence, labeled instances are a biased sample of positive instances. For example, in mechanical design, a specimen subjected to a higher stress is more likely to break, which results in a higher probability of a crack being detected. This is clearly a situation where SCAR assumption does not hold.
\end{enumerate}

In this section, we focus on the definition of loss functions that enable learning in PU learning setting. After explaining why labeled-unlabeled classifiers are limited, we will introduce an unbiased empirical risk for PU learning under SCAR assumption \citep[\cf][]{Plessis2014Jan}, which generalizes to SAR assumption \citep[\cf][]{Bekker2018Sep}. 

\subsection{Bias issue with labeled-unlabeled classification}

A natural idea to address a PU learning problem is to consider labeled instances as positive and every unlabeled instances as negative. Standard classification methods then allow to identify a classifier $\widehat{g}_{NT}$. In the literature, such a classifier is called a \emph{nontraditional classifier} \citep{elkan_learning_2008} because it is meant to give good predictions on $S$ instead of $\Class$. As the number of training examples increases, we can then expect $\widehat{g}_{NT}$ to get closer to Bayes classifier $\tilde{g}^*$ for the classification of $S$ given $X$ which is not what we are looking for. Indeed, $\tilde{g}^*$ is \textit{a priori} different from $g^*$ as the regression function $\tilde{\eta}(x)=\prt{Y=1 \vert X=x}$ is different from $\eta(x)$ (\cf\  \eqnref{eta2}).

Nevertheless, in specific situations, the nontraditional classifier is robust to PU learning label noise. \cite{Cannings} showed for example that $\tilde{g}^* = g^*$ if:
\begin{equation}\label{Canningscond}
e(x) \geq \frac{1}{2 \eta(x)}, \mbox{ for all } x \in \R^d\mbox{ such that }\eta(x)\geq \frac{1}{2}\ .
\end{equation}
Note that this is part of a more general result from \cite{Cannings} that encompasses binary classification with asymmetric and instance-dependent label noise.
Under the condition from \eqnref{Canningscond}, any consistent non-traditional classifier is a consistent traditional classifier. In other words, as the training sample size increases, $\widehat{g}_{NT}$ gets closer to $\tilde{g}^*$ which is identical to $g^*$.\\

This condition requires every positive instance ($\eta(x)>\frac{1}{2}$) difficult to classify ($\eta(x)$ close to $\frac{1}{2}$) to have propensity close enough to $1$. Instances easier to classify ($\eta(x)$ close to $1$) can undergo label noise without harming the consistence. However, the label noise cannot exceed $\frac{1}{2}$ or, in other words, the propensity can never be smaller than $\frac{1}{2}$. 

This condition is thus restrictive in the context of PU learning under SAR assumption for two main reasons. On the one hand, in many realistic situations, the propensity (\textit{i.e.} the probability for a positive instance to be labeled) is correlated to the difficulty of classifying the observation. A positive instance difficult to classify tends to have low propensity which clearly violates the condition given in \eqnref{Canningscond}. On the other hand, we cannot expect the propensity to be greater than $\frac{1}{2}$. In \emph{text classification} or \emph{spam review detection}, as the process of labelling is both difficult and time consuming, only a small fraction of positive instances gets labeled, which suggests a propensity lower than $\frac{1}{2}$.

Before dealing with convergence rates, it is crucial to have methods for building consistent classifiers under more general conditions than \eqnref{Canningscond}.

\subsection{Unbiased empirical risk minimization under SCAR assumption}

In this subsection, we assume that SCAR assumption is satisfied, which means that the propensity is constant:
\begin{equation*}
    e(x)=e_m > 0\ .
\end{equation*}
In order to compensate for label noise due to PU Learning under SCAR assumption, \cite{Plessis2014Jan} showed in the case-control setting that a consistent classifier can be found by minimizing an unbiased version of the risk. Using the convenient decomposition of $\P_X$ distribution (\eqnref{decomp}), the misclassification risk can be rewritten only with $\P_X$ and $\P_1$.

\begin{align}\label{decomp2}
    \risk\prt{g} &= \alpha\, \P_1\prt{g(X)\neq 1} + (1-\alpha)\, \P_0\prt{g(X)\neq 0} \nonumber\\
    &= \alpha \prt{\P_1\prt{g(X)\neq 1} - \P_1\prt{g(X)\neq 0}} + \P_X\prt{g(X)\neq 0}.
\end{align}

Therefore, as labeled instances are a representative sub-sample of positive instances, a consistent classifier can be found by minimizing the following risk:
\begin{equation*}
    \Empscar(g) = \frac{\alpha}{N_L}\sum_{i=1}^n \ind{\Lab_i=1}\brkt{\ind{g(X_i) \neq 1} - \ind{g(X_i) \neq 0}} + \frac{1}{n}\sum_{i=1}^n \ind{g(X_i) \neq 0}
\end{equation*}
where $N_L = \sum_{i=1}^n \ind{\Lab_i=1}$ is the number of labeled instances. Note that \cite{Plessis2014Jan} considered the case-control setting where the number of labeled instances $N_L$ is fixed which is slightly different from our setting. 
One of the main properties of $\Empscar(g)$ is that it is an unbiased estimate of the true risk, as we have:
\begin{equation*}
    \esp{\Empscar(g)} = \P\prt{g(X)\neq \Class}\ .
\end{equation*}
The proof of \cite{Plessis2014Jan} extends to the one sample setting where $N_L$ is random:

\begin{subequations}
\begin{align}
    \esp{\Empscar(g)} &= \sum_{i=1}^n\esp{\frac{\alpha}{N_L}\ind{\Lab_i=1}\esp{ \ind{g(X_i)\neq 1}-\ind{g(X_i) \neq 0}\, \vert\, \Lab_i}} + \P_X\prt{g(X)\neq 0} \notag\\
    &= \sum_{i=1}^n\esp{\frac{\alpha}{N_L}\ind{\Lab_i=1}\brkt{\P\prt{g(X_i)\neq 1\, \vert\, S_i}-\P\prt{S_i=1,g(X_i)\neq 0\, \vert\, S_i}}} \notag\\
    &+ \P_X\prt{g(X)\neq 0} \notag\\
    &= \alpha \sum_{i=1}^n \esp{\frac{\ind{\Lab_i=1}}{N_L} \prt{\P_1\prt{g(X)\neq 1} - \P_1\prt{g(X)\neq 0}}}
    + \P_X\prt{g(X)\neq 0} \label{eq-scar1} \\
    &= \alpha \brkt{\P_1\prt{g(X)\neq 1} - \P_1\prt{g(X)\neq 0}} + \P_X\prt{g(X)\neq 0} \label{eq-scar2} \ .
\end{align}
\end{subequations}
\eqnref{eq-scar1} results from the fact that under SCAR assumption the conditional distribution of $X$ given $\Lab=1$ is the same as the conditional distribution of $X$ given $\Class=1$ ($\P_1$).  Finally, \eqnref{eq-scar2} matches the decomposition of \eqnref{decomp2}, ending the proof.

Computing the risk $\Empscar$ requires $\alpha$ to be known. Alternatively, another empirical risk can be written:
\begin{equation}\label{SCARrisk2}
    \Empscarp(g) = \frac{1}{n}\sum_{i=1}^n\brkt{ \frac{\ind{\Lab_i=1}}{e_m}\prt{\ind{g(X_i) \neq 1} - \ind{g(X_i) \neq 0}} + \ind{g(X_i) \neq 0}}\ .    
\end{equation}
This risk remains unbiased and consistent but requires the knowledge of the constant propensity $e_m$ instead of the class prior $\alpha$. The unbiasedness of $\Empscarp$ will be proved in \ssref{sarloss} as a special case of the more general SAR setting.

\subsection{Extension to PU learning under SAR assumption}\label{sarloss}

For now, PU learning under SAR assumption is a difficult problem and there are only few results in the literature \citep[\cf][]{Bekker2018Sep, He2018Aug, gong_instance-dependent_2021}. We recall that empirical risk minimization under SCAR assumption requires extra knowledge on the model (class prior or propensity).  In practice, these parameters are usually estimated \citep[\cf][]{blanchard2010semi, du2014class, jain_estimating_2016, ramaswamy_mixture_2016, bekker_estimating_2018}. In order to provide a consistent empirical risk in SAR setting, additional assumptions are needed to avoid identifiability issues. In the literature, different settings have been studied. \cite{He2018Aug} assume that the propensity $e(x)$ is an increasing function of $\eta(x)$. \cite{Bekker2018Sep} and \cite{gong_instance-dependent_2021} suggest a \emph{parametric} model on the propensity. \cite{Bekker2018Sep} also study the case where the propensity is known for labeled instances which enables an empirical risk minimization approach similar to \cite{Plessis2014Jan}.

In this paper, following \cite{Bekker2018Sep}, we will focus on PU learning under SAR assumption where the propensity is known for labeled instances. We argue that this setting is sufficient to derive interesting risk bounds and assess the difficulty of PU learning tasks. However restrictive this assumption may seem, we insist that only the propensity for labeled instances is needed, therefore an exhaustive knowledge of propensity is not required. In practice, the propensity can be estimated using prior knowledge on the labeling mechanism (when available) or by defining a parametric model on the propensity \citep{Bekker2018Sep, gong_instance-dependent_2021}.

Under this assumption, \cite{Bekker2018Sep} generalized the empirical risk in \eqnref{SCARrisk2} to obtain an unbiased empirical risk for PU learning under SAR assumption. More particularly, they define the following loss function:
\begin{equation*}
\begin{split}
    \empsar\prt{g,\prt{X,\Lab}} &= \frac{\ind{\Lab=1}}{e(X)}\prt{\ind{g(X)\neq 1} - \ind{g(X)\neq 0}} + \ind{g(X) \neq 0}\\
    &= \frac{\ind{\Lab=1}}{e(X)}\prt{2\, \ind{g(X)\neq 1} - 1} + \ind{g(X) \neq 0}\ .
\end{split}
\end{equation*}
The empirical risk is then the empirical mean:
\begin{equation}\label{risksar}
    \Empsar(s) = \frac{1}{n} \sum_{i=1}^n \empsar\prt{g,\prt{X_i,\Lab_i}}\ .
\end{equation}
This time, the labeled instances are weighted by the inverse of there propensity. Clearly, $\Empscarp$ in \eqnref{SCARrisk2} is a special case of $\Empsar$ under SCAR assumption ($e(x)=e_m$).

\cite{Bekker2018Sep} studied maximum deviations between this latter empirical risk $\Empsar$ and the empirical risk for standard classification $\Emp$. 
They then used concentration inequalities to derive an upper bound with high probability on the deviations between the two quantities. As we are interested in studying directly the deviations between $\Empsar$ and the true risk $\risk$, we compute the total expectation of $\esp{\Empsar(g)} = \esp{\empsar\prt{g,\prt{X,\Lab}}}$ shedding light on the fact that for any $g$, $\Empsar(g)$ is an unbiased estimate of the true risk $\risk(g)$.
\begin{align*}
\allowdisplaybreaks
    \esp{\empsar\prt{g,\prt{X,\Lab}}} &= \esp{\esp{\empsar\prt{g,\prt{X,\Lab}} \vert X}} \\
    &= \esp{\frac{1}{e(X)}\prt{\ind{g(X)\neq 1} - \ind{g(X)\neq 0}}\P\prt{\Lab=1 \vert X}} + \P_X\prt{g(X) \neq 0} \\
    &= \esp{\frac{1}{e(X)}\prt{\ind{g(X)\neq 1} - \ind{g(X)\neq 0}}\eta(X) e(X)} + \P_X\prt{g(X) \neq 0}\\
    &= \esp{\prt{\ind{g(X)\neq 1} - \ind{g(X)\neq 0}} \ind{\Class=1}} + \P_X\prt{g(X) \neq 0}\\
    &= \alpha \prt{\P_1\prt{g(X)\neq 1} - \P_1\prt{g(X)\neq 0}} + \P_X\prt{g(X) \neq 0}\\
    &=R(g) \ .
\end{align*}

where the last line comes from \eqnref{decomp2}. Then, $\Empsar$ is indeed unbiased:
\begin{equation}\label{sar-unbiased}
    \esp{\Empsar(g)} = \frac{1}{n} \sum_{i=1}^n \esp{\empsar\prt{g,\prt{X_i,\Lab_i}}}=\P\prt{g(X)\neq \Class} = \risk(g)\ .
\end{equation}

\section{Main results}\label{results}
We are now in position to state our results. We first present an upper bound on the excess risk for PU learning under SAR assumption. We then show that the rate achieved is almost optimal by providing a lower bound on the minimax risk. Both bounds quantify explicitly the impact of label noise due to PU learning.

\subsection{An upper bound for the PU learning excess risk under SAR assumption}

We recall that, in PU learning, the true classes $(\Class_i)_{1\leq i\leq n}$ are no longer available for training. A classifier is then built as a minimizer of the unbiased empirical risk introduced in \eqnref{risksar}:
\begin{equation}\label{minsar}
    \widehat{g}_{PU} \in \argmin_{g \in \S} \Empsar(g)\ .
\end{equation}

We recall that the risk $\Empsar$ is unbiased (\eqnref{sar-unbiased}) and we will denote $\Empsarc$ the centered empirical risk:
\begin{equation*}
    \Empsarc(g)=\Empsar(g)-\P\prt{g(X) \neq \Class}\ .
\end{equation*}

\cite{Bekker2018Sep} study the deviations between $\Empsar\prt{\widehat{g}_{PU}}$ and $\Emp\prt{\widehat{g}_{PU}}$, and provide an upper bound in the case where $\S$ is a \emph{finite} family of classifiers. Besides, the influence of $e(\cdot)$ on the upper bound is not discussed. Our objective here is to provide a uniform upper bound on $\ell(\widehat{g}_{PU}, g^*)$ and show explicitly its dependence in $e(\cdot)$. In our setting, $\S$ is an \emph{infinite} set of functions. Its complexity is controlled by its VC dimension $V<+\infty$. Following \cite{Massart2006Oct}, we consider the following separability assumption which is key to work with the possibly uncountable class $\S$:
\begin{enumerate}
\item[\clabel{$A_1$}] \quad There exists a countable subset $\S^\prime$ dense in $\S$ in the sense that for each $g \in \S$, there exists a sequence $\prt{g_k}_{k \geq 0}$ such that, for every $(x,s) \in \R^d\times \crochets{0,1}$:
\begin{equation*}
    \empsar\prt{g_k,\prt{x,s}} \underset{k \rightarrow +\infty}{\longrightarrow} \empsar\prt{g,\prt{x,s}}\ .
\end{equation*}
\end{enumerate}
In addition, we want our upper bound on the excess risk to explicitly account for the difficulty of the classification task. Then, as $\abs{2\eta(x)-1}$ quantify the difficulty of classifying $x$, we introduce the following assumption \citep{Massart2006Oct}:
\begin{enumerate}
\item[\clabel{$A_2$}] \quad $\exists\, h>0,\ \forall x \in \R^d,\ \abs{2\eta(x)-1} \geq h\ .$
\end{enumerate}
Assumption \cref{$A_2$} will be referred as \emph{Massart noise} assumption in the rest of the paper.\\

We are now able to state our upper bound for PU learning under SAR assumption.
\begin{theorem}[Upper risk bound for PU learning under SAR assumption]\label{th1}
~\\
Let $\widehat{g}_{PU}$ be a minimizer of the unbiased empirical risk for PU learning under SAR assumption:
\begin{equation*}
    \widehat{g}_{PU} \in \argmin_{g \in \S} \Empsar(g)\ .
\end{equation*}
Suppose that separability \cref{$A_1$} and Massart noise \cref{$A_2$} assumptions hold, and that the propensity $e(\cdot)$ is greater than $e_m>0$. Then, we have the following upper bound on the excess risk:
\begin{equation}\label{upper}
    \esp{\ell\prt{\widehat{g}_{PU}, g^*}} \leq \kappa_1 \brkt{\frac{V}{n\, e_m\, h} \prt{1+\log\prt{\frac{n\,h^2}{V} \vee 1}} \wedge \sqrt{\frac{V}{n\, e_m}}}
\end{equation}
where $\kappa_1>0$ is an absolute constant.
\end{theorem}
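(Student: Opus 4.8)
The plan is to recast the statement as an empirical risk minimization problem over $\S$ for a \emph{bounded} loss obeying a Bernstein (margin) inequality with constant of order $1/(e_m h)$, and then to run the localization/peeling analysis of \cite{Massart2006Oct} on that loss. Since $g^*\in\S$ and $\widehat{g}_{PU}$ minimizes $\Empsar$ over $\S$, we have $\Empsar(\widehat{g}_{PU})\le\Empsar(g^*)$, hence
\begin{equation*}
\ell\prt{\widehat{g}_{PU},g^*}=\risk(\widehat{g}_{PU})-\risk(g^*)\le\sup_{g\in\S}\brkt{\prt{\risk(g)-\risk(g^*)}-\prt{\Empsar(g)-\Empsar(g^*)}}\ .
\end{equation*}
Writing $w_g\prt{x,s}=\empsar\prt{g,\prt{x,s}}-\empsar\prt{g^*,\prt{x,s}}$, the right-hand side is $\sup_{g\in\S}\prt{\esp{w_g}-\frac{1}{n}\sum_{i=1}^n w_g\prt{X_i,S_i}}$; by \eqnref{sar-unbiased} its mean at $g$ is $\esp{w_g}=\ell(g,g^*)\ge 0$ (recall that $g^*$, being Bayes, also minimizes $\risk$ over $\S$), and separability \cref{$A_1$} makes this supremum measurable and reduces it to a countable subclass.

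The next step is to control the variance and the sup-norm of the increments. Using $\ind{g(X)\neq1}=1-g(X)$ one obtains the identity
\begin{equation*}
w_g\prt{X,S}=\prt{g(X)-g^*(X)}\prt{1-\frac{2\,\ind{S=1}}{e(X)}}\ ,
\end{equation*}
so that $\prt{g(X)-g^*(X)}^2=\ind{g(X)\neq g^*(X)}$ and $\abs{w_g}\le 2/e_m$. Conditioning on $X$, using $\P\prt{S=1\mid X=x}=e(x)\eta(x)$ together with $e_m\le e(x)\le 1$, yields $\esp{w_g^2\mid X=x}=\ind{g(x)\neq g^*(x)}\prt{1+4\eta(x)\prt{1/e(x)-1}}\le\frac{4}{e_m}\ind{g(x)\neq g^*(x)}$; combined with the Massart condition \cref{$A_2$}, which gives $\ell(g,g^*)=\esp{\abs{2\eta(X)-1}\ind{g(X)\neq g^*(X)}}\ge h\,\P_X\prt{g(X)\neq g^*(X)}$, we get the Bernstein relation
\begin{equation*}
\esp{w_g^2}\le\frac{4}{e_m}\,\P_X\prt{g(X)\neq g^*(X)}\le\frac{4}{e_m h}\,\ell(g,g^*)\ .
\end{equation*}
Equivalently, the rescaled increments $\tilde w_g:=\frac{e_m}{2}w_g$, i.e. the increments of the loss $\frac{e_m}{2}\empsar$ whose empirical average is again minimized by $\widehat{g}_{PU}$, satisfy $\abs{\tilde w_g}\le1$ and $\esp{\tilde w_g^2}\le\frac{2}{h}\,\esp{\tilde w_g}$ — exactly the structure enjoyed by the $0/1$ loss in \cite{Massart2006Oct} under Massart noise of order $h$. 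The parameter $e_m$ has thus been absorbed into the rescaling, which is why it will surface only as an overall factor $1/e_m$ and not inside the logarithm.

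It remains to localize. On the slice $\crochets{g\in\S\ :\ \ell(g,g^*)\le r}$ the increments $w_g$ have variance at most $\frac{4}{e_m}\prt{\frac{r}{h}\wedge1}$, and $\crochets{g-g^*:g\in\S}$ is a VC-subgraph class of dimension $\mathcal{O}(V)$; feeding this into the peeling scheme of \cite{Massart2006Oct} — a VC-type maximal inequality on each slice, Talagrand's concentration inequality to handle the data-dependent slice containing $\widehat{g}_{PU}$, and resolution of the resulting fixed-point equation — produces, as soon as $h$ is at least of order $\sqrt{V/(n e_m)}$, the bound $\esp{\ell(\widehat{g}_{PU},g^*)}\le\kappa\,\frac{V}{n e_m h}\prt{1+\log\prt{\frac{n h^2}{V}\vee1}}$. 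In the complementary regime — and as a bound valid for all $h$ — discarding the margin and using only the uniform estimate $\esp{w_g^2}\le 4/e_m$ with a chaining bound for the VC class $\crochets{w_g:g\in\S}$ (whose $L^2(\P)$-diameter is of order $1/\sqrt{e_m}$) gives $\esp{\ell(\widehat{g}_{PU},g^*)}\le\kappa'\sqrt{V/(n e_m)}$, in the spirit of \eqnref{general-upper}. Taking the minimum of these two bounds yields \eqnref{upper}.

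The delicate point is this last step: carrying Massart's peeling/localization argument, originally written for the $0/1$ loss, over to the reweighted increments $w_g$ while keeping track of absolute constants, and in particular producing the maximal inequality with the sharp logarithmic factor $1+\log\prt{n h^2/V\vee1}$ rather than a cruder $1+\log(n/V)$. The rescaling above makes this transfer transparent, but one still has to verify that the only properties of the loss the argument exploits are its boundedness, the Bernstein relation, and the VC structure of $\S$ — which is precisely what the variance/sup-norm computation supplies.
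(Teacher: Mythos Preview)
Your proposal is correct and follows the same route as the paper: both establish the Bernstein relation $\esp{w_g^2}\le\frac{4}{e_m h}\,\ell(g,g^*)$ and then feed it into Massart's localization machinery --- the paper does so by stating an abstract version (its Theorem~\ref{th3}) and verifying explicit functions $d$, $w$, $\Phi$, while your rescaling by $e_m/2$ is simply an equivalent normalization before invoking the same black box. The step you flag as delicate is precisely the paper's Proposition~\ref{prop3} (the empirical-process bound via symmetrization, splitting $w_g$ by the sign of $g-g_0$, and Haussler's entropy estimate), and its explicit fixed-point computation in Appendix~\ref{result1} confirms your rescaling heuristic: the $C_e\sim 1/e_m$ factors cancel inside the logarithm, so $e_m$ surfaces only as the overall factor $1/e_m$.
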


\paragraph*{Remarks:}
The upper bound in \eqnref{upper} is uniform on the set of probability distributions for which $g^* \in \S$ and Massart noise condition \cref{$A_2$} is satisfied with constant $h$ ($\Psh$). This can be re-written as follows:
\begin{equation}\label{upper-unif}
    \sup_{\P \in \Psh} \esp{\ell\prt{\widehat{g}_{PU}, g^*}} \leq \kappa_1 \brkt{\frac{V}{n\, e_m\, h} \prt{1+\log\prt{\frac{n\,h^2}{V} \vee 1}} \wedge \sqrt{\frac{V}{n\, e_m}}}
\end{equation}

Note that the assumption $e(x)\geq e_m$ is an additional assumption on the label noise. As the biased regression function is $\tilde{\eta}(x)=\eta(x)\, e(x)$ (\cf\  \eqnref{eta2}), this assumption together with assumption \cref{$A_2$} control the difficulty of the PU learning task.

In \eqnref{upper}, the convergence rate is of order $\mathcal{O}\prt{\frac{V}{n\,h\, e_m}}$ (if we let aside the logarithmic term) when $h$ is higher than $\sqrt{V/n\,e_m}$. When $h$ becomes smaller than $\sqrt{V/n\,e_m}$, the rate is of order $\mathcal{O}\prt{\sqrt{V/n\,e_m}}$. These two regimes are analogous to standard classification risk bounds as recalled in \ssref{riskstd}. In particular, when $e_m = 1$, all positive examples are labeled and we are then in a standard classification setting ($\Lab=\Class$). In this case, the upper bound exactly matches the known upper bound rates in standard classification setting (\eqnref{massart-upper} and \eqnref{general-upper}). Conversely, as $e_m$ gets lower, the upper bound increases. This means without surprise that PU learning deteriorates the generalization bound: \thref{th1} quantifies this effect through the coefficients $1/e_m$ and $1/\sqrt{e_m}$.\\

Let $N_L$ be the number of labeled instances in the training set. Under SCAR assumption ($e(x)=e_m$), $n\, e_m$ from \eqnref{upper} is linked to the expectation of the number of labeled instances in the training set:
\begin{equation*}
    \esp{N_L} = \esp{\sum_{i=1}^n \ind{\Lab_i=1}} = n\, \P\prt{\Lab=1} = n\,\alpha\,e_m
\end{equation*}
where $\alpha=\P\prt{\Class=1}$ is the class prior. This illustrates a natural intuition on PU learning: the upper bound on the excess risk is related to the number of fully labeled examples. Hence, good prediction performances cannot be expected if the number of labeled examples among the positives is too low, or equivalently if the propensity is too low.

The detailed proof of \thref{th1} can be found in Appendix \ref{proofth1}. It consists in establishing controls on the variance of increments of $\empsar(\cdot)$ and uniform bounds on the empirical process $\prt{\overline{\Gamma}(g)}_{g \in \S}$. A general risk bound result for empirical risk minimizers is then applied.\\

\par So far, we provided an upper bound on generalization risk for unbiased empirical risk minimization in PU learning under SAR assumption. There is however no proof that this rate is optimal. In other words, is there another procedure that can learn a classifier $\widehat{g}$ that outperforms $\widehat{g}_{PU}$? A lower bound will help to answer this question.

\subsection{A lower bound on minimax risk}

In order to assess the optimality of the upper bound (\eqnref{upper}), we analyse and provide a lower bound on minimax risk.\\

The minimax risk is the risk of the classification procedure that performs best in the worst case. For any given estimate $\widehat{g}$, we recall that its generalization risk is measured as $\esp{\ell\prt{\widehat{g}, g^*}}$. The minimax risk is denoted $\Rsh$ and is defined as follows:
\begin{equation}\label{minimaxdef}
    \Rsh = \inf_{\widehat{g}\in \S}\brkt{\sup_{\P \in \Psh} \esp{\ell\prt{\widehat{g}, g^*}}}
\end{equation}
where the infimum is taken over the set of functions $\widehat{g}$ of $(X_i, \Lab_i)_{1\leq i\leq n}$ such that $\widehat{g}$ belongs to $\S$.\\

The bound \eqnref{upper-unif} is an obvious upper bound on the minimax risk. \thref{th2} establishes a lower bound on minimax risk for PU learning under SCAR assumption. Proposition \ref{propmm} extends it to SAR assumption.

\begin{theorem}[Lower bound on minimax risk under SCAR assumption]\label{th2}~\\
Suppose that $V\geq 2$ and $n\, e_m \geq V$.
Let $\hp = \sqrt{\frac{V}{n\, e_m}}$.\\ Assuming $e(x)=e_m,\ \forall x\in \R^d$, there exists an absolute constant $\kappa_2>0$ such that:
\begin{enumerate}
    \item[\clabel{$C_1$}] if $h \geq \hp$:
\begin{equation}\label{lower1}
    \Rsh \geq \kappa_2 \frac{V-1}{h\, n\, e_m}\ ;
\end{equation}
    \item[\clabel{$C_2$}] if $h \leq \hp$:
\begin{equation}\label{lower2}
    \Rsh \geq \kappa_2 \sqrt{\frac{V-1}{n\, e_m}}\ .
\end{equation}
\end{enumerate}
\end{theorem}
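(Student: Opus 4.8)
The plan is to run the classical minimax lower-bound scheme for classification under Massart's noise condition \citep[\cf][]{lugosi2002, Massart2006Oct}, the only PU-specific ingredient being an information bound reflecting that observing $\Lab\sim\mathrm{Ber}\prt{e_m\,\eta(X)}$ is strictly less informative than observing $\Class\sim\mathrm{Ber}\prt{\eta(X)}$. Concretely, I would exhibit a finite family of SCAR distributions indexed by a hypercube, all lying in $\Psh$, and apply Assouad's lemma. Since restricting the infimum in \eqnref{minimaxdef} to $\widehat g\in\S$ only makes it larger, it suffices to lower-bound the infimum over all measurable estimators based on $(X_i,\Lab_i)_{1\le i\le n}$.

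\paragraph*{The hard sub-family.}
As $\S$ has VC dimension $V$, fix $V$ points $x_0,x_1,\dots,x_{V-1}$ shattered by $\S$; we may assume $h\le 1$. For $\sigma\in\crochets{-1,1}^{V-1}$, let $\P_\sigma$ put mass $1-(V-1)w$ on $x_0$ and mass $w$ on each $x_j$ ($1\le j\le V-1$), for a free parameter $0<w\le 1/(V-1)$, with regression function $\eta(x_0)=1$ and $\eta(x_j)=\tfrac{1+\sigma_j h}{2}$, and with SCAR labelling of propensity $e_m$, so that $\Lab\mid X=x\sim\mathrm{Ber}\prt{e_m\,\eta(x)}$. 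Then $\abs{2\eta(x)-1}\ge h$ at every support point; the Bayes rule satisfies $g^*_\sigma(x_0)=1$ and $g^*_\sigma(x_j)=\ind{\sigma_j=1}$, which is one of the $2^{V-1}$ admissible labellings of the $V$ shattered points and hence is realized by some element of $\S$, so $\P_\sigma\in\Psh$. For any classifier $g$,
\begin{equation*}
\ell\prt{g,g^*_\sigma}\ \ge\ w\,h\sum_{j=1}^{V-1}\ind{g(x_j)\ne g^*_\sigma(x_j)}\ ,
\end{equation*}
keeping only the contributions of $x_1,\dots,x_{V-1}$ (each misclassified $x_j$ contributes $\P_X\prt{\crochets{x_j}}\abs{2\eta(x_j)-1}=w\,h$); the right-hand side is a Hamming loss between the test vector $\prt{g(x_j)}_{j}$ and $\sigma$.

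\paragraph*{Assouad's lemma and the information cost.}
Assouad's lemma then yields
\begin{equation*}
\inf_{\widehat g}\ \max_{\sigma}\ \esps{\P_\sigma^{\otimes n}}{\ell\prt{\widehat g,g^*_\sigma}}\ \ge\ \frac{(V-1)\,w\,h}{2}\ \prt{1-\max_{\sigma\sim\sigma'}\lVert \P_\sigma^{\otimes n}-\P_{\sigma'}^{\otimes n}\rVert_{\mathrm{TV}}}\ ,
\end{equation*}
the inner maximum over neighbours at Hamming distance $1$. If $\sigma,\sigma'$ differ only in coordinate $j$, the two laws agree off $\crochets{X=x_j}$, where the conditional laws of $\Lab$ are $\mathrm{Ber}\prt{\tfrac{e_m(1+h)}{2}}$ and $\mathrm{Ber}\prt{\tfrac{e_m(1-h)}{2}}$; since the support points are distinct, the squared Hellinger distance is additive over them and
\begin{equation*}
\mathrm{Hel}^2\prt{\P_\sigma,\P_{\sigma'}}=w\,\mathrm{Hel}^2\prt{\mathrm{Ber}\prt{\tfrac{e_m(1+h)}{2}},\mathrm{Ber}\prt{\tfrac{e_m(1-h)}{2}}}\le c_0\,w\,e_m\,h^2
\end{equation*}
for an absolute $c_0$. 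It is important to use Hellinger rather than Kullback--Leibler here: both Bernoulli parameters are of size of order $e_m$, so a $\chi^2$/KL estimate would carry a factor $\prt{e_m(1-h)}^{-1}$ that degenerates as $h\to1$, whereas the Hellinger bound stays of order $e_m h^2$ for every $h\in(0,1]$ and $e_m\in(0,1]$. Tensorizing over the sample and passing to total variation, $\lVert\P_\sigma^{\otimes n}-\P_{\sigma'}^{\otimes n}\rVert_{\mathrm{TV}}\le\sqrt{2n\,\mathrm{Hel}^2\prt{\P_\sigma,\P_{\sigma'}}}\le\sqrt{2c_0\,n\,w\,e_m}\;h$ --- the point at which the effective sample size $n$ gets replaced by $n\,e_m$.

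\paragraph*{Tuning $w$, conclusion, and the main difficulty.}
When $h\ge\hp=\sqrt{V/(n\,e_m)}$, choose $w=c_1\prt{h^2\,n\,e_m}^{-1}$ with $c_1$ a suitably small absolute constant, so that the total-variation term above is at most $1/2$; the admissibility constraint $(V-1)w\le1$ then holds because $h^2\,n\,e_m\ge V$, and substituting back into the Assouad bound gives \eqnref{lower1}. When $h\le\hp$, note that $\hp\le1$ (this is where $n\,e_m\ge V$ is used) and that $\Psh\supseteq\Pb{\S,\hp}$ since $\hp\ge h$, hence $\Rsh\ge\Rshp$; applying the previous step with $\hp$ in place of $h$ yields $\Rsh\ge\kappa_2\,\tfrac{V-1}{\hp\,n\,e_m}=\kappa_2\,\tfrac{V-1}{\sqrt{V\,n\,e_m}}\ge\tfrac{\kappa_2}{\sqrt2}\sqrt{\tfrac{V-1}{n\,e_m}}$, the last inequality using $V\ge2$ (so $\tfrac{V-1}{V}\ge\tfrac12$); collecting constants gives \eqnref{lower2}. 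The step I expect to be most delicate is the Hellinger estimate above: one must check that the PU ``discount'' is exactly a factor $e_m$ in the effective sample size, \emph{uniformly} over $h\in(0,1]$ and $e_m\in(0,1]$, which is precisely why a Kullback--Leibler bound is insufficient and Hellinger (or an explicit case split according to the size of $e_m(1-h)$) is needed. The remaining ingredients --- shattering to force $g^*_\sigma\in\S$, the reduction to a Hamming loss, Assouad's lemma, and the optimization over $w$ --- are routine.
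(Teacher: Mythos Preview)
Your proposal is correct and follows essentially the same approach as the paper: the hypercube of discrete distributions supported on $V$ shattered points with regression function at Massart margin $h$, reduction to a Hamming loss, Assouad's lemma with the Hellinger bound $\mathrm{Hel}^2\le C\,w\,e_m\,h^2$, the choice $w\asymp (h^2 n e_m)^{-1}$ in case \cref{$C_1$}, and the monotonicity $\Rsh\ge\Rshp$ for case \cref{$C_2$}. The only cosmetic differences are your parametrization by $\sigma\in\{-1,1\}^{V-1}$ versus the paper's $b\in\{0,1\}^{V-1}$, and your passage through total variation before invoking Hellinger whereas the paper states Assouad directly in Hellinger form; your remark on why Hellinger (and not KL) is the right affinity here is a worthwhile observation that the paper leaves implicit.
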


\noindent\textbf{Remarks}

The lower bounds in \thref{th2} explicitly depends on $V$, $n$, $h$ and $e_m$. The cases \cref{$C_1$} and \cref{$C_2$} highlight a trade-off between the expected number of fully labeled instances (proportional to $n\,e_m$), the complexity of the model $V$ and the noise condition \cref{$A_2$} represented by $h$. The restriction of these results to standard classification setting ($e_m=1$) exactly matches existing results \citep[see][]{Massart2006Oct}. \thref{th2} moreover provides the influence of propensity $e_m$ in PU learning framework under SCAR assumption. As for the upper bound (\cf\  \thref{th1}), the lower bound \eqnref{lower1} is affected the same way with a degradation of order $1/e_m$ over the minimax rate when Massart noise condition \cref{$A_2$} is satisfied with $h$ high enough, in case \cref{$C_1$}. In this case, the lower bound rate almost matches the upper bound up to a logarithmic factor. In the second case \cref{$C_2$}, the lower bound is of order $\sqrt{V/n\,e_m}$ which exactly matches the rate of the upper bound in this regime. In this sense, $\widehat{g}_{PU}$ obtained through unbiased empirical risk minimization is almost optimal as it almost achieves the minimax convergence rates.\\


The detailed proof of \thref{th2} can be found in the Appendix~\ref{proofth2}. It makes use of similar arguments as for minimax lower bounds in standard classification setting. First, the expression of minimax risk is simplified by choosing a specific set of probabilities satisfying the noise conditions. Then Assouad lemma \citep{Yu1997} is applied to provide a lower bound on this simplified expression, where the singularity of PU learning mainly interferes.\\

To extend the result to SAR assumption, we need an extra condition:
\begin{enumerate}
\item[\clabel{$A_3$}] $\forall\,\varepsilon>0,\ \exists\, (x_1,...,x_V)\in \prt{\R^d}^V$ \text{scattered by }$\S$ and such that:
\begin{equation*}
\sup_{i \in \{1,...,V\}} e(x_i)\leq e_m + \varepsilon\ .
\end{equation*}
\end{enumerate}
This assumption is technical. It is used in the first step of the proof of the minimax lower bound as it allows us to choose a convenient family of discrete probability distributions satisfying the noise assumptions. Assumption \cref{$A_3$} is fulfilled in natural situations, for example, when $e(\cdot)$ is continuous and $\S$ is the set of linear classifiers in $\R^d$.

\begin{proposition}[Lower bound on minimax risk under SAR assumption]\label{propmm}~\\
\thref{th2} extends to SAR assumption if the propensity $e(\cdot)$ greater than $e_m>0$ and if assumption \cref{$A_3$} is satisfied. 
\end{proposition}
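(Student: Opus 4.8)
\emph{Proof sketch of Proposition~\ref{propmm}.} The plan is to re-run the proof of \thref{th2} essentially verbatim, the only change being the choice of the $V$-point configuration on which the hard sub-family of distributions is built, together with a limiting argument in an auxiliary parameter $\varepsilon>0$. Recall that the lower bound of \thref{th2} is obtained by restricting the supremum in \eqnref{minimaxdef} to a finite family $\crochets{\P_\sigma}_{\sigma\in\crochets{-1,1}^{V-1}}$ whose covariate marginal $\P_X$ is purely atomic on a set of $V$ points shattered by $\S$, with regression function equal to $\prt{1\pm h}/2$ on those points --- so that \cref{$A_2$} holds and every Bayes classifier $g_\sigma^*$ is realised in $\S$ --- and then applying Assouad's lemma \citep{Yu1997}. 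In this construction the propensity enters at exactly one place: the single-observation Kullback--Leibler divergence between two neighbouring hypotheses $\P_\sigma,\P_{\sigma^{(j)}}$ for the \emph{observed} pair $(X,S)$, which factorises into the atom mass $\P_X\prt{\crochets{x_j}}$ at the flipped point times the divergence between the two Bernoulli laws of $S$ given $X=x_j$. Under SCAR these laws are $\mathrm{Bern}\!\prt{e_m\tfrac{1\pm h}{2}}$, the divergence is of order $e_m h^2$, and requiring $n$ times the resulting quantity to stay below a fixed constant is what fixes the admissible atom mass (of order $1/(n\,e_m\,h^2)$, or an $\mathcal{O}(1/V)$ cap when $h\leq\hp$); since the per-coordinate excess-risk gap equals $h$ times the atom mass and carries no propensity factor, this yields the rates \eqnref{lower1} and \eqnref{lower2}.

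First I would fix $\varepsilon>0$ and invoke assumption \cref{$A_3$} to pick $V$ points $x_1,\dots,x_V$ shattered by $\S$ with $\sup_i e(x_i)\leq e_m+\varepsilon$, then build the sub-family $\crochets{\P_\sigma}$ exactly as in the SCAR proof on these points, retaining the prescribed propensity $e(\cdot)$. Since $e(\cdot)\geq e_m$ by hypothesis and \cref{$A_2$} holds by construction, each $\P_\sigma$ belongs to $\Psh$, so this is a legitimate restriction of the supremum in \eqnref{minimaxdef}, and the per-coordinate loss gap is unchanged. The only quantity to recompute is the single-observation Bernoulli divergence on a flipped coordinate $j$: using $e_m\leq e(x_j)\leq e_m+\varepsilon$, it is dominated by its value at $e(x_j)=e_m+\varepsilon$, and in the main range of $h$ this gives, for a universal constant $c$,
\begin{equation*}
\mathrm{KL}\!\prt{\mathrm{Bern}\!\prt{e(x_j)\tfrac{1+h}{2}}\,\|\,\mathrm{Bern}\!\prt{e(x_j)\tfrac{1-h}{2}}} \;\leq\; c\,(e_m+\varepsilon)\,h^2\ .
\end{equation*}
Hence the whole derivation goes through with $e_m$ replaced throughout by $e_m+\varepsilon$, producing --- with the same absolute constant $\kappa_2$ as in \thref{th2} --- the bound $\Rsh\geq\kappa_2\,\tfrac{V-1}{h\,n\,(e_m+\varepsilon)}$ in the regime $h\geq\sqrt{V/(n(e_m+\varepsilon))}$ and $\Rsh\geq\kappa_2\,\sqrt{(V-1)/(n(e_m+\varepsilon))}$ otherwise.

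Finally, since $\Rsh$ does not depend on $\varepsilon$, I would let $\varepsilon\downarrow 0$: the threshold converges to $\hp=\sqrt{V/(n\,e_m)}$ and the right-hand sides to the expressions of \eqnref{lower1} and \eqnref{lower2}, which proves the proposition. The step I expect to require the most care is the uniformity of the Bernoulli-divergence estimate above: one must keep the parameters $e(x_j)\prt{1\pm h}/2$ inside a fixed compact sub-interval of $(0,1)$ so that $c$ is genuinely absolute, which is immediate once $h$ is bounded away from $1$, while the complementary (easy) regime --- where the bound is of order $V/(n\,e_m)$ --- is absorbed into $\kappa_2$ exactly as in the proof of \thref{th2}. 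Everything else is bookkeeping: \cref{$A_3$} merely supplies an arbitrary shattered configuration onto which the SCAR construction is transplanted, and the limit in $\varepsilon$ erases the loss incurred by working with $e_m+\varepsilon$ instead of $e_m$.
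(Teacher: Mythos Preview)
Your proposal is correct and follows essentially the same route as the paper: fix $\varepsilon>0$, use assumption \cref{$A_3$} to select a shattered $V$-tuple with $\sup_i e(x_i)\leq e_m+\varepsilon$, rerun the SCAR lower-bound argument with $e_m$ replaced by $e_m+\varepsilon$, and let $\varepsilon\downarrow 0$. The only difference is that the paper controls the pairwise separation via the squared Hellinger distance (their Lemma~\ref{lemma2} gives the clean bound $\mathcal{H}^2(\P_b,\P_{b'})\leq 2\,p\,e(x_i)\,h^2$ uniformly in $h\in(0,1]$), whereas you use a KL bound that requires the extra case split near $h\approx 1$; switching to Hellinger would spare you that bookkeeping.
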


The proof of the above proposition can be found in Appendix \ref{proofprop}. The same remarks as for \thref{th2} remain valid under SAR assumption when assumption \cref{$A_3$} is satisfied. In particular, in regimes \cref{$C_1$} and \cref{$C_2$}, the minimax rate still matches the upper bound rate \eqnref{upper} up to the logarithmic factor.

\section{Conclusion}\label{conclusion}
In this paper, we provided a theoretical study of PU learning under SAR assumption, \textit{i.e.} when the probability for an instance to be labeled depends on its covariates. Assuming partial knowledge on the propensity, a consistent classifier can be identified by minimizing a conveniently weighted empirical risk. We established a general non-asymptotic upper bound on the excess risk that naturally extends known risk bounds in standard classification setting. By providing a minimax lower bound, we then showed that the convergence rates are optimal up to a logarithmic term. Besides, these bounds explicitly quantify the difficulty of the PU learning task in terms of the propensity. Finally, both results show that under noise assumptions, fast rates can be achieved.

As a future perspective, it would be interesting to study how some assumptions made on the propensity could be relaxed. In particular, future work could assess whether or not the theoretical guaranties proved in this paper still hold when the propensity is estimated. Likewise, we may wonder if these results could be extended if the lower bound on the propensity only holds with high probability.
To bridge the gap between these theoretical results and practical PU learning methodologies adapted to SAR assumption, several challenges remain open. For instance, the estimation of the propensity is a difficult problem.
Besides, minimizing the unbiased empirical risk in PU learning based on $0-1$ loss requires solving computationally difficult combinatorial optimization problems. The use of convex loss functions would facilitate the optimization. Then, it could be interesting to study how our theoretical results extend to such case.


\acks{This work was carried out within the framework of the partnership between Stellantis and the OpenLab AI with the financial support of the ANRT for the CIFRE contract n°2019/1131.}


\newpage

\appendix
\section{Proof of \thref{th1}}
\label{proofth1}

The proof is organized as follows. We first state a general upper bound result for empirical risk minimizers 
adapted to the case where the loss function takes values in an arbitrary interval $[a,b]$ with $a<b$ (\cf\  \ssref{GRUBERM}). Then, we show that the PU learning loss function satisfies the assumptions of this general result (\cf\  \ssref{assumptions}). Finally, we deduce the upper bound as the solution of a fixed point equation (\cf\  \ssref{result1}).

\subsection{General risk upper bound on empirical risk minimizers}\label{GRUBERM}
We begin by stating a general upper bound theorem for empirical risk minimizers.
\begin{theorem}[General upper bound for empirical risk minimizers]
\label{th3}~\\
Let $\emp$ be an unbiased loss function with values in $[a,b]$, $\Emp$ the empirical risk, $\Empc$ the centered empirical risk. Let $g^*$ denote the Bayes classifier and let $\widehat{g}$ be a minimizer of the empirical risk over a class $\S$ for which we assume separability condition \cref{$A_1$}. Let $\ell$ denote the  excess risk. We assume that:
\begin{enumerate}
    \item[\clabel{$B_1$}] there exists a positive and symmetric function $d$ such that for any couple of classifiers $(g,g^\prime)$:
    \begin{equation*}
         Var\brkt{\emp\prt{g^\prime, \prt{X,\Lab}}-\emp\prt{g, \prt{X,\Lab}}} \leq d^2\prt{g^\prime, g};
    \end{equation*}
    \item[\clabel{$B_2$}] there exists an non-decreasing function $w$ continuous on $\R_+$, such that $x\mapsto \frac{w(x)}{x}$ is non-increasing on $\R_+^*$, with $w(\sqrt{b-a})\geq b-a$ and ensuring for any classifier $g$:
    \begin{equation*}
        d(g^*,g)\leq w\prt{\sqrt{\ell(g^*,g)}};
    \end{equation*}
    \item[\clabel{$B_3$}] there exists an non-decreasing function $\Phi$ continuous on $\R_+$, such that $x\mapsto \frac{\Phi(x)}{x}$ is non-increasing with $\Phi(b-a)\geq b-a$ and ensuring:
    \begin{equation*}
        \forall h \in \S^\prime, \sqrt{n}\, \esp{\sup_{g \in \S^\prime, d\prt{g,h}\leq \sigma} \Empc(h)-\Empc(g)} \leq \Phi(\sigma).
    \end{equation*}
    for every positive $\sigma$ such that $\Phi(\sigma)\leq \sqrt{n}\frac{\sigma^2}{b-a}$, where $\G^\prime$ comes from separability condition \cref{$A_1$}.
\end{enumerate}
    Then there exists a absolute constant $\kappa>0$ such that:
    \begin{equation}\label{bound}
        \esp{\ell(g^*, \widehat{g})} \leq \kappa\, \varepsilon_*^2,
    \end{equation}
    where $\varepsilon_*$ is the unique positive solution of following equation:
    \begin{equation}\label{fixed-point}
        \sqrt{n}\, \varepsilon_*^2 = \Phi\prt{w\prt{\varepsilon_*}}.
    \end{equation}
\end{theorem}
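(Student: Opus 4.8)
The plan is to run the classical peeling-and-concentration argument for excess risks of empirical risk minimizers, as in \cite{Massart2006Oct}, while keeping careful track of the length $b-a$ of the range of $\emp$ throughout. The starting point is that, since $\widehat{g}$ minimizes $\Emp$ over $\S$ and $\emp$ is unbiased,
\begin{equation*}
    \ell\prt{g^*,\widehat{g}} = \brkt{\Emp\prt{\widehat{g}}-\Emp\prt{g^*}} + \brkt{\Empc\prt{g^*}-\Empc\prt{\widehat{g}}} \leq \Empc\prt{g^*}-\Empc\prt{\widehat{g}},
\end{equation*}
and by the separability assumption \cref{$A_1$} the right-hand side is at most $\sup_{g\in\S^\prime}\prt{\Empc(g^*)-\Empc(g)}$. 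Hence it suffices to establish a uniform control of the form $\Empc(g^*)-\Empc(g)\leq \tfrac12\prt{\ell(g^*,g)\vee\rho^2}$ for all $g\in\S^\prime$ on a large-probability event, where $\rho$ is chosen of order $\varepsilon_*$; on that event $\ell(g^*,\widehat{g})\leq\rho^2$, and integrating the complementary tail over the confidence level produces $\esp{\ell(g^*,\widehat{g})}\leq\kappa\,\varepsilon_*^2$.

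To obtain that uniform control I would peel $\S^\prime$ into shells $\S_j=\crochets{g\in\S^\prime : 2^{j-1}\rho^2<\ell(g^*,g)\leq 2^{j}\rho^2}$ for $j\geq 1$, together with the central ball $\ell(g^*,g)\leq\rho^2$. On $\S_j$, assumption \cref{$B_2$} gives $d(g^*,g)\leq w\prt{2^{j/2}\rho}$, so $\S_j$ is contained in a $d$-ball of that radius around $g^*$, and assumption \cref{$B_3$} bounds $\sqrt{n}\,\esp{\sup_{\S_j}\prt{\Empc(g^*)-\Empc(g)}}$ by $\Phi\prt{w\prt{2^{j/2}\rho}}$ — provided the side condition $\Phi(\sigma)\leq\sqrt{n}\,\sigma^2/(b-a)$ holds at each scale that appears, which is precisely where the normalizations $w(\sqrt{b-a})\geq b-a$ and $\Phi(b-a)\geq b-a$ are used. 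Simultaneously, assumption \cref{$B_1$} controls the variance of each increment $\emp(g,\cdot)-\emp(g^*,\cdot)$ on $\S_j$ by $d^2(g^*,g)\leq w\prt{2^{j/2}\rho}^2$, which is exactly the variance input required by a Bousquet-type concentration inequality for the supremum of the empirical process, with $b-a$ entering the linear (Bernstein) term.

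I would then apply Bousquet's concentration inequality on each shell to deduce that, with probability at least $1-e^{-x}$, the shell supremum exceeds its mean by at most a term of order $\sqrt{w\prt{2^{j/2}\rho}^2 x/n}+(b-a)x/n$, and take a union bound over $j$, which is affordable since the radii grow geometrically. The monotonicity hypotheses — $t\mapsto w(t)/t$ and $t\mapsto\Phi(t)/t$ non-increasing — imply that $\Phi(w(t))/\sqrt{n}$ grows more slowly than $t^2$ for $t\geq\varepsilon_*$, so the deviation attributed to shell $\S_j$ is a fixed fraction of its radius $2^{j}\rho^2$ once $\rho\geq c\,\varepsilon_*$; summing the geometric series and adjusting constants yields the desired uniform bound on an event of probability $1-e^{-x}$, after which integrating in $x$ gives \eqnref{bound}. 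Existence and uniqueness of $\varepsilon_*$ solving $\sqrt{n}\,\varepsilon_*^2=\Phi(w(\varepsilon_*))$ follow from continuity together with the monotonicity conditions: $t\mapsto\Phi(w(t))/t^2$ is non-increasing and continuous on $\R_+^*$, blows up as $t\to 0^+$ and, using $w(\sqrt{b-a})\geq b-a$ and $\Phi(b-a)\geq b-a$, lies below $\sqrt{n}$ for $t$ large, so it meets the level $\sqrt{n}$ exactly once.

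The main obstacle I anticipate is not any single deep inequality but the bookkeeping inside the peeling step: verifying that the side condition in \cref{$B_3$} is satisfied at every scale $2^{j/2}\rho$ that occurs, that the passage through Bousquet's inequality leaves the constant absolute, and that the interplay between $w$, $\Phi$ and the geometric slicing indeed forces the fixed-point scale $\varepsilon_*$. The extension from $[0,1]$-valued to $[a,b]$-valued losses is then purely a matter of carrying the constant $b-a$ — already neutralized by the normalizations in \cref{$B_2$}, \cref{$B_3$} — faithfully through these estimates.
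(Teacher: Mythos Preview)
Your outline is correct in substance --- peeling in excess-risk shells, using \cref{$B_2$} to bound the $d$-radius of each shell, \cref{$B_3$} to control the mean supremum, \cref{$B_1$} to feed the variance into a Bousquet-type inequality, then summing the geometric series and integrating the tail --- this is precisely the machinery behind Theorem~2 of \cite{Massart2006Oct}, and carrying the constant $b-a$ through it would indeed yield \eqnref{bound}.

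The paper, however, takes a much shorter route: rather than rerunning the whole peeling/concentration argument with $b-a$ tracked, it simply rescales. One passes to the $[0,1]$-valued loss $\tilde{\emp}=(\emp-a)/(b-a)$ and sets $\tilde{d}=d/(b-a)$, $\tilde{w}(x)=w\prt{x\sqrt{b-a}}/(b-a)$, $\tilde{\Phi}(x)=\Phi\prt{(b-a)x}/(b-a)$; one then checks that \cref{$B_1$}--\cref{$B_3$} for $(\emp,d,w,\Phi)$ translate into the corresponding hypotheses of \cite[Theorem~2]{Massart2006Oct} for $(\tilde{\emp},\tilde{d},\tilde{w},\tilde{\Phi})$ --- the normalizations $w(\sqrt{b-a})\geq b-a$ and $\Phi(b-a)\geq b-a$ are exactly what make the rescaled functions satisfy the $[0,1]$-versions of those conditions --- and applies that theorem as a black box. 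Unwinding the rescaling in the fixed-point equation for the tilded quantities gives back \eqnref{fixed-point}. So what you propose to prove from scratch the paper obtains in one line by reduction; your approach has the merit of being self-contained and of making the role of $b-a$ explicit at each step, while the paper's buys brevity by deferring all the analytic work to the cited result.
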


\begin{proof}
The above result follows from the application of Massart and Nédélec's theorem (\citeyear[Theorem~2]{Massart2006Oct}) using the re-scaled risk $\tilde{\gamma}=\frac{\gamma-a}{b-a}$ and the functions $\tilde{d}(g,g^\prime) = \frac{d(g,g^\prime)}{b-a}$, $\tilde{w}(x) = \frac{1}{b-a}\, w\prt{x\sqrt{b-a}}$ and $\tilde{\Phi}(x)=\frac{1}{b-a}\Phi\prt{\prt{b-a}x}$. This leads to the upper bound in \eqnref{bound} solution of \eqnref{fixed-point}.
\end{proof}
Note that now, contrary to Massart and Nédélec's original result, \cref{$B_2$} and \cref{$B_3$} explicitly involve the length of the interval $[a,b]$. This will be accounted for in our proof.

\subsection{Verification of assumptions of \thref{th3} in PU learning setting} \label{assumptions}
We first recall the definition and the main property of PU learning loss function as defined in \ssref{sarloss}. We then exhibit three functions $d$, $w$, $\Phi$ fulfilling conditions \cref{$B_1$}, \cref{$B_2$} and \cref{$B_3$}. Hence we show that the general upper bound result (\ie\ \thref{th3}) can be applied in PU learning context.\\

In the context of PU learning under SAR assumption, we recall that the loss function $\empsar$ is defined as follows:
\begin{equation*}
    \empsar\prt{g,\prt{X,\Lab}} = \frac{\ind{\Lab=1}}{e(X)}\prt{2\, \ind{g(X)\neq 1} - 1} + \ind{g(X) \neq 0}
\end{equation*}
where $e(x)=\P\prt{\Lab=1\, \vert \, \Class=1,X=x}$ is the propensity assumed to be known for labeled observations. Knowing that the propensity greater than $e_m>0$, the loss function is then at values in $\brkt{1-\frac{1}{e_m}, \frac{1}{e_m}}$, interval of length: 
\begin{equation}\label{Ce}
C_e = \frac{2}{e_m}-1\ .
\end{equation}

We have seen that this empirical risk is an unbiased estimate of the true risk (\cf\  \eqnref{sar-unbiased}):
\begin{equation*}
    \esp{\empsar\prt{g,\prt{X,\Lab}}} = \P\prt{g(X)\neq \Class}\ .
\end{equation*}
In order to apply the general upper bound theorem (\thref{th3}) to PU learning risk minimizer, we need to identify three functions $d$, $w$, $\Phi$ satisfying conditions \cref{$B_1$}, \cref{$B_2$} and \cref{$B_3$}. These functions are crucial since the upper bound is the solution of a fixed point equation involving them. The choice of functions $d$, $w$ and $\Phi$ will be a consequence of Propositions \ref{prop1}, \ref{prop2} and \ref{prop3}.

\begin{proposition}\label{prop1}
For any pair of classifiers $(g,g^\prime)$:
\begin{equation*}
Var\brkt{\empsar\prt{g^\prime, \prt{X,\Lab}}-\empsar\prt{g, \prt{X,\Lab}}} \leq 2\, C_e\, \esp{\abs{g(X)-g^\prime(X)}^2}\ ,
\end{equation*}
where $C_e$ is given by \eqnref{Ce}.
\end{proposition}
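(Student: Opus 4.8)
The plan is to reduce the variance of the loss increment to a pointwise, conditional-on-$X$, second-moment computation. First I would use that every $g\in\S$ takes values in $\crochets{0,1}$, so that $\ind{g(X)\neq 1} = 1 - g(X)$ and $\ind{g(X)\neq 0} = g(X)$. Substituting this into the definition of $\empsar$ and subtracting, the constant $-1$ in the first bracket and the $\ind{g(X)\neq 0}$ term telescope, and one gets the clean identity
$$
\empsar\prt{g^\prime,\prt{X,\Lab}} - \empsar\prt{g,\prt{X,\Lab}} = \prt{g(X)-g^\prime(X)}\prt{\frac{2\,\ind{\Lab=1}}{e(X)} - 1}\ .
$$
Writing $D = g(X)-g^\prime(X)$, a function of $X$ alone valued in $\crochets{-1,0,1}$, and $W = \frac{2\,\ind{\Lab=1}}{e(X)} - 1$, the increment is simply the product $DW$.

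Next I would bound the variance by the second moment, $Var\brkt{DW}\leq \esp{D^2W^2}$, and condition on $X$: since $D$ is $X$-measurable, $\esp{D^2W^2} = \esp{D^2\,\esp{W^2\mid X}}$. It remains to control $\esp{W^2\mid X}$. Given $X$, $\ind{\Lab=1}$ is Bernoulli with parameter $\P\prt{\Lab=1\mid X} = e(X)\eta(X)$ (\cf\ \eqnref{eta2}), so a direct expansion gives
$$
\esp{W^2\mid X} = e(X)\eta(X)\prt{\frac{2}{e(X)}-1}^2 + \prt{1 - e(X)\eta(X)} = 1 + \frac{4\,\eta(X)\prt{1-e(X)}}{e(X)}\ ,
$$
the key simplification being that the $1/e(X)^2$ term collapses to $1/e(X)$ after multiplication by $e(X)\eta(X)$.

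Finally, I would bound the right-hand side with the available constraints: $0\leq \eta(X)\leq 1$, $e(X)\leq 1$ (it is a probability, hence $1-e(X)\geq 0$ and $x\mapsto 1/x-1$ is non-increasing), and $e(X)\geq e_m$. This yields $\esp{W^2\mid X}\leq 1 + 4\prt{\frac{1}{e_m}-1} = \frac{4}{e_m}-3\leq \frac{4}{e_m}-2 = 2\,C_e$, with $C_e = \frac{2}{e_m}-1$ from \eqnref{Ce}. Plugging back, $\esp{D^2W^2}\leq 2\,C_e\,\esp{D^2} = 2\,C_e\,\esp{\abs{g(X)-g^\prime(X)}^2}$, which is the claimed bound. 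The argument is essentially routine once the increment is put in the product form $DW$; the only points requiring a little care are the cancellation in $\esp{W^2\mid X}$ and the use of $e(X)\leq 1$ to control $\prt{1-e(X)}/e(X)$, so I do not anticipate a genuine obstacle.
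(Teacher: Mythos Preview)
Your proof is correct and follows essentially the same route as the paper: bound the variance by the second moment, factor the increment as $\prt{g(X)-g^\prime(X)}\prt{\frac{2\ind{\Lab=1}}{e(X)}-1}$, condition on $X$ to compute $\esp{W^2\mid X}=1+4\eta(X)\frac{1-e(X)}{e(X)}$, and then use $\eta(X)\leq 1$ and $e(X)\geq e_m$ to reach $2C_e$. The only cosmetic difference is that you spell out the factorization and the intermediate inequality $\frac{4}{e_m}-3\leq 2C_e$ explicitly, whereas the paper absorbs these into a single line.
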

\paragraph*{Remark:} A direct consequence of the above proposition is that the function $d$ defined as:
\begin{equation}\label{d-def}
    d(g,g^\prime) = \sqrt{2 C_e} \sqrt{\esp{\abs{g(X)-g^\prime(X)}^2}}
\end{equation}
satisfies condition \cref{$B_1$}.\\
\begin{proof}
We first provide an upper bound on the variance of increments of $\empsar$:
\begin{subequations}
\begin{align}
Var \brkt{\empsar\prt{g} - \empsar\prt{g^\prime}}
 &\leq \esp{\prt{\empsar\prt{g} - \empsar\prt{g^\prime}}^2} \nonumber\\
&= \esp{\prt{g(X)-g^\prime(X)}^2 \prt{1-\frac{2\mathbf{1}_{\Lab=1}}{e(X)}}^2} \nonumber\\
&= \esp{\prt{g(X)-g^\prime(X)}^2 \esp{\prt{1-\frac{2\mathbf{1}_{\Lab=1}}{e(X)}}^2\vert X}}\nonumber\\
&= \esp{\prt{g(X)-g^\prime(X)}^2 \prt{1+4\eta(X)\frac{1-e(X)}{e(X)}}}\label{eqd-a}\\
& \leq \prt{1 + 4 \frac{1-e_m}{e_m}}\esp{\prt{g(X)-g^\prime(X)}^2} \label{eqd-b}\\
& \leq 2\, C_e\, \esp{\prt{g(X)-g^\prime(X)}^2}\ . \nonumber\end{align}
\end{subequations}
We then use the fact that $\esp{\ind{\Lab=1}\vert X} = \esp{\eta(X)\, e(X)}$ to get \eqnref{eqd-a}. And \eqnref{eqd-b} results from the fact that $\eta(X)$ is less than $1$ and $e(X)$ is greater than $e_m$.
\end{proof}

\begin{proposition}\label{prop2}
For any classifier $g$:
\begin{equation*}
d(g,g^*) \leq \sqrt{\frac{2\, C_e}{h}} \sqrt{\ell(g,g^*)}\ .
\end{equation*}
for $d$ defined in \eqnref{d-def}.
\end{proposition}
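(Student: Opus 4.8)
The plan is to reduce the claimed inequality to the classical Massart margin bound relating the excess misclassification risk to the probability of disagreement with the Bayes classifier. Recalling the definition \eqnref{d-def} of $d$ and squaring, the claim $d(g,g^*)\leq \sqrt{2C_e/h}\,\sqrt{\ell(g,g^*)}$ is equivalent to $h\,\esp{\abs{g(X)-g^*(X)}^2}\leq \ell(g,g^*)$. Since $g$ and $g^*$ take values in $\crochets{0,1}$, we have $\abs{g(X)-g^*(X)}^2=\ind{g(X)\neq g^*(X)}$, so it suffices to prove $h\,\P\prt{g(X)\neq g^*(X)}\leq \ell(g,g^*)$.

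The main step is to establish the standard excess-risk identity $\ell(g,g^*)=\esp{\abs{2\eta(X)-1}\,\ind{g(X)\neq g^*(X)}}$. First I would condition on $X$, using $\risk(g)=\esp{\ind{g(X)=1}\prt{1-\eta(X)}+\ind{g(X)=0}\,\eta(X)}$, and, substituting $\ind{g(X)=0}=1-\ind{g(X)=1}$ (and likewise for $g^*$), deduce $\ell(g,g^*)=\esp{\prt{\ind{g(X)=1}-\ind{g^*(X)=1}}\prt{1-2\eta(X)}}$. Then I would argue pointwise: on $\crochets{g(X)=g^*(X)}$ the integrand vanishes, while on $\crochets{g(X)\neq g^*(X)}$ a short case analysis on the value of $g^*(X)=\ind{\eta(X)\geq 1/2}$ (equivalently on the sign of $1-2\eta(X)$) shows that $\prt{\ind{g(X)=1}-\ind{g^*(X)=1}}\prt{1-2\eta(X)}=\abs{2\eta(X)-1}$ on that event.

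Finally I would apply Massart noise assumption \cref{$A_2$}, i.e.\ $\abs{2\eta(x)-1}\geq h$ for every $x\in\R^d$, to bound the integrand from below: $\ell(g,g^*)\geq h\,\esp{\ind{g(X)\neq g^*(X)}}=h\,\esp{\abs{g(X)-g^*(X)}^2}$. Taking square roots and multiplying by $\sqrt{2C_e}$ yields the desired inequality. I do not anticipate any real obstacle here; the only point demanding a little attention is the elementary case analysis behind the excess-risk identity, which is precisely where the factor $\abs{2\eta(X)-1}$ --- and hence, after invoking \cref{$A_2$}, the gain of $1/h$ --- enters.
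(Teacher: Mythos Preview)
Your proposal is correct and follows essentially the same approach as the paper: both arguments establish the excess-risk identity $\ell(g,g^*)=\esp{\abs{2\eta(X)-1}\,\ind{g(X)\neq g^*(X)}}$ and then invoke the Massart noise assumption \cref{$A_2$} to lower-bound it by $h\,\esp{\abs{g(X)-g^*(X)}^2}=\frac{h}{2C_e}d^2(g,g^*)$. The paper is more terse about the identity itself, simply stating it, whereas you spell out the conditioning and case analysis, but the substance is identical.
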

\paragraph*{Remark:} As a consequence, the function $w$ defined as:
\begin{equation}\label{w}
    w(x) = \sqrt{\frac{2\, C_e}{h}}\, x\ .
\end{equation}
satisfies Assumption \cref{$B_2$}: $w$ is continuous on $\R_+$, non-decreasing, such that $x \mapsto \frac{w(x)}{x}$ is non-increasing and $w\prt{\sqrt{C_e}}\geq C_e$, and such that:
\begin{equation}\label{d}
    d(g^*,g)\leq w\prt{\sqrt{\ell\prt{g^*, g}}}\ .
\end{equation}

Let $\hp=\sqrt{V/n\, e_m}$. Note that the function
\begin{equation}\label{w0}
w_0(x) = \sqrt{2\, C_e} \vee x\,\sqrt{2\, C_e / \hp}
\end{equation}
also satisfies assumption \cref{$B_2$}.

\begin{proof}
The excess risk can be expressed in terms of $\eta(X)$ as follows:
\begin{equation}
\begin{split}
    \ell(g,g^*) &= \P\prt{g(X)\neq \Class} - \P\prt{g^*(X)\neq \Class} \\
    &= \esp{\abs{g(X)-g^*(X)}^2 \abs{2\, \eta(X) - 1}} \label{eql}\ .
\end{split}
\end{equation}

Then, using the noise assumption \cref{$A_2$} and the definition of $d$ \eqnref{d-def}, we have the following lower bound on the excess risk:
\begin{equation*}
\begin{split}
    \ell(g,g^*) &= \esp{\prt{g(X)-g^*(X)}^2 \abs{2\, \eta(X) - 1}}\\
    & \geq h\, \esp{\prt{g(X)-g^*(X)}^2} \\
    & = \frac{h}{2\, C_e} d^2(g, g^*)\ .
\end{split}
\end{equation*}
Taking the square root on both side finishes the proof.
\end{proof}

The next proposition states the existence of $\Phi$ fulfilling \cref{$B_3$}. We recall that the subset $\S^\prime \subset \S$ is given by the separability assumption \cref{$A_1$} and that the constant $C_e$ is defined in \eqnref{Ce}.
\begin{proposition}\label{prop3}
Assume $\S$ has finite VC dimension $V$ and $\S^\prime$ is given by separability assumption \cref{$A_1$}. There exists a absolute constant $K\geq 1$ such that the function $\Phi$ defined as
\begin{equation}\label{phi}
  \Phi(\sigma) = K \sigma \sqrt{V \brkt{1 + \log\prt{\frac{C_e}{\sigma}\vee 1}}}  
\end{equation}
satisfies:
\begin{equation*}
\sqrt{n}\, \esp{\sup_{g \in \S^\prime, d\prt{g,h}\leq \sigma} \Empsarc(g_0)-\Empsarc(g)} \leq \Phi(\sigma)
\end{equation*}
for all $g_0 \in \S^\prime$ and for every $\sigma$ such that $\Phi(\sigma)\leq \sqrt{n}\, \frac{\sigma^2}{C_e}$.
\end{proposition}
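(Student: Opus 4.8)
The plan is to recognize $\Empsarc(g_0)-\Empsarc(g)$ as an empirical process indexed by $g$ and to bound its expected supremum over the local ball $\crochets{g\in\S^\prime:\ d(g,g_0)\leq\sigma}$ by a chaining argument, following the line of \cite{Massart2006Oct} for the $0$-$1$ loss; the only genuinely new ingredient here is a bounded multiplicative weight. Writing $\P_n$ for the empirical distribution of $(X_i,\Lab_i)_{1\leq i\leq n}$, so that $\Empsarc(g)=(\P_n-\P)\brkt{\empsar(g,\cdot)}$, the computation already carried out in the proof of Proposition~\ref{prop1} gives
\begin{equation*}
\Empsarc(g_0)-\Empsarc(g)=(\P_n-\P)f_g,\qquad f_g(x,s)=\prt{g_0(x)-g(x)}\,\xi(x,s),\qquad \xi(x,s)=1-\frac{2\,\ind{s=1}}{e(x)}.
\end{equation*}
Two quantities will govern the chaining: a sup-norm bound and an $L^2(\P)$ radius. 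Since $e_m\leq e(x)\leq 1$ we have $\abs{\xi}\leq C_e$, hence $\abs{f_g}\leq C_e$; and the conditional second moment obtained in Proposition~\ref{prop1}, namely $\esp{\xi^2\mid X}=1+4\,\eta(X)\frac{1-e(X)}{e(X)}\leq 2\,C_e$, yields $\esp{f_g^2}\leq 2\,C_e\,\esp{\abs{g_0(X)-g(X)}^2}=d(g,g_0)^2\leq\sigma^2$ on the ball. So the class $\F_\sigma=\crochets{f_g:\ g\in\S^\prime,\ d(g,g_0)\leq\sigma}$ has envelope $C_e$ and $L^2(\P)$ radius at most $\sigma$.

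Next I would control the metric entropy of $\F_\sigma$. The map $g\mapsto g_0-g$ is an $L^2(Q)$ isometry of $\S$ onto $\crochets{g_0-g:\ g\in\S}$, and multiplying by the fixed weight $\xi$, with $\abs{\xi}\leq C_e$, rescales $L^2$ distances by at most $C_e$; combined with Haussler's covering bound for the VC class $\S$ of dimension $V$, this furnishes absolute constants $A,c\geq 1$ with $\sup_Q\log N\prt{\delta,\F_\sigma,L^2(Q)}\leq c\,V\brkt{1+\log\prt{A\,C_e/\delta}}$ for all $0<\delta\leq C_e$. I would then take the standard route: symmetrize, condition on $(X_i,\Lab_i)_{1\leq i\leq n}$ so that the Rademacher sum is sub-Gaussian for the $L^2(\P_n)$ pseudometric, and run Dudley's chaining bound up to the radius $\sigma$ (with the usual truncation to pass from $L^2(\P_n)$ to $L^2(\P)$), obtaining an absolute $K_1$ with
\begin{equation*}
\sqrt{n}\;\esp{\sup_{g\in\S^\prime,\,d(g,g_0)\leq\sigma}\brkt{\Empsarc(g_0)-\Empsarc(g)}}\leq K_1\brkt{\int_0^{\sigma\wedge C_e}\sqrt{V\brkt{1+\log\prt{\frac{A\,C_e}{\delta}}}}\,d\delta+\frac{C_e\,V}{\sqrt{n}}\brkt{1+\log\prt{\frac{A\,C_e}{\sigma}\vee 1}}}.
\end{equation*}
Here the second term is the remainder produced by the bound $\abs{f_g}\leq C_e$. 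An elementary estimate of the entropy integral, treating the regimes $\sigma\leq C_e$ and $\sigma>C_e$ separately (in the latter $C_e/\sigma\vee 1=1$), gives $\int_0^{\sigma\wedge C_e}\sqrt{V[1+\log(A\,C_e/\delta)]}\,d\delta\leq K_2\,\sigma\sqrt{V[1+\log(C_e/\sigma\vee 1)]}$ uniformly in $\sigma>0$.

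Finally I would absorb the remainder term, which is the point at which the restriction on $\sigma$ enters. By \eqnref{phi}, $\Phi(\sigma)/(K\sigma)=\sqrt{V[1+\log(C_e/\sigma\vee 1)]}$, and an elementary comparison of the two logarithms gives $\sqrt{V[1+\log(A\,C_e/\sigma\vee 1)]}\leq\sqrt{1+\log A}\;\Phi(\sigma)/(K\sigma)$. Inserting this in the remainder term and then invoking the hypothesis $\Phi(\sigma)\leq\sqrt{n}\,\sigma^2/C_e$ — equivalently $C_e\,\Phi(\sigma)/(\sqrt{n}\,\sigma)\leq\sigma$ — one finds $\frac{C_e\,V}{\sqrt{n}}\brkt{1+\log\prt{A\,C_e/\sigma\vee 1}}\leq\frac{1+\log A}{K}\,\Phi(\sigma)$, so the remainder is dominated by the entropy-integral term $K_2\,\sigma\sqrt{V[1+\log(C_e/\sigma\vee 1)]}=(K_2/K)\,\Phi(\sigma)$. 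Combining the two displays and choosing the absolute constant $K$ large enough yields
\begin{equation*}
\sqrt{n}\;\esp{\sup_{g\in\S^\prime,\,d(g,g_0)\leq\sigma}\brkt{\Empsarc(g_0)-\Empsarc(g)}}\leq K\,\sigma\sqrt{V\brkt{1+\log\prt{\frac{C_e}{\sigma}\vee 1}}}=\Phi(\sigma),
\end{equation*}
which is the assertion; the structural properties of $\Phi$ demanded by \cref{$B_3$} — that $\Phi$ and $x\mapsto\Phi(x)/x$ are monotone and $\Phi(C_e)\geq C_e$ — are then checked directly (enlarging $K$ if necessary). I expect the maximal-inequality step to be the main obstacle: one has to carry the logarithmic factors with enough care that they collapse to $1+\log(C_e/\sigma\vee 1)$ in every regime, and — more essentially — to use the side condition $\Phi(\sigma)\leq\sqrt{n}\,\sigma^2/C_e$ precisely to keep the envelope-induced remainder from dominating, which is the sole role of that hypothesis.
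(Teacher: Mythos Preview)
Your argument is correct and follows the same overall strategy as the paper: express $\Empsarc(g_0)-\Empsarc(g)$ as an empirical process, symmetrize, chain using a uniform entropy bound inherited from the VC class $\S$ via Haussler, and then invoke the side condition $\Phi(\sigma)\leq\sqrt{n}\,\sigma^2/C_e$ to kill the remainder term coming from the envelope $C_e$.

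The implementation differs in two places worth noting. First, the paper splits $f_g$ by the sign of $g_0-g$, working separately with the indicator classes $\mathcal{A}_\pm=\crochets{\ind{g\gtrless g_0}:g\in\S^\prime}$ so that Haussler's bound applies directly to classes of $\{0,1\}$-valued functions; you instead observe that $g\mapsto g_0-g$ is an $L^2(Q)$ isometry of $\S$, which is a slightly cleaner way to transfer the covering bound and avoids the split entirely. Second, and more substantively, the paper handles the remainder by a self-bounding argument: it introduces the random radius $\delta^2=\sup_{f}\P_n(f^2)\vee\sigma^2$, shows $\esp{\delta^2}\leq C_e\,\esp{W^+}+\tfrac{3}{2}\sigma^2$, and solves the resulting quadratic inequality in $\esp{W^+}$, which is where the restriction $\sigma\geq C(\sigma)\,C_e\sqrt{V/n}$ (equivalently $\Phi(\sigma)\leq\sqrt{n}\,\sigma^2/C_e$) enters. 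Your route produces an explicit additive remainder $\frac{C_e V}{\sqrt{n}}\brkt{1+\log(A C_e/\sigma\vee 1)}$ and absorbs it directly via $C_e\Phi(\sigma)/(\sqrt{n}\,\sigma^2)\leq 1$. Both mechanisms are standard and lead to the same $\Phi$; the paper's quadratic trick is perhaps more in the spirit of \cite{Massart2006Oct}, while yours makes the role of the side condition more transparent.
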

\begin{proof}
We consider a fixed $g_0 \in \S^\prime$ along the proof and use the notation:
\begin{equation*}
    W = \sup_{g \in \S^\prime, d\prt{g,g_0}\leq \sigma} \Empsarc(g_0)-\Empsarc(g)\ .
\end{equation*}
The main steps of the proof are: \nameref{par1} rewrite $W$ as the supremum of an empirical process over a class of functions; \nameref{par2} split the expression of $W$ in two terms depending on the sign of $(g_0(x)-g(x))$ ($W^+$ and $W^-$) that will be processed similarly and independently; \nameref{par3} provide an upper bound on $\esp{W^+}$ using a symetrization principle \citep[\cf][]{Bousquet2003Feb}; \nameref{par4} apply a chaining inequality and Haussler bound \citep{Bousquet2003Feb, Massart2006Oct}; \nameref{par5} a few calculations finish the proof. This proof uses the notion of entropy metrics: the definition and some useful properties are recalled in Appendix \ref{uem}.\\

\paragraph*{(i)}\label{par1} We start by rewriting the expression inside the supremum in $W$:
\begin{equation*}
    \begin{split}
        \Empsarc(g_0)&-\Empsarc(g) = \Empsar(g_0)-\Empsar(g) - \esp{\Empsar(g_0)-\Empsar(g)}\\
        &=\frac{1}{n}\sum_{i=1}^n \prt{\empsar\prt{g_0,(X_i,\Lab_i)} - \empsar\prt{g,(X_i,\Lab_i)}} - \esp{\Empsar(g_0) - \Empsar(g)}\\
        &= \frac{1}{n}\sum_{i=1}^n \prt{g_0(X_i)-g(X_i)}\prt{\frac{2\, \ind{\Lab_i=1}}{e(X_i)} - 1} - \esp{\prt{g(X)-g_0(X)}\prt{\frac{2\, \ind{\Lab=1}}{e(X)} - 1}}\\
        &= \prt{\P_n - \P}(f_g),
    \end{split}
\end{equation*}
where $\P_n f_g$ and $\P f_g$ denote respectively the empirical mean and the expectation of the function $f_g$:
\begin{equation*}
    f_g:(x,s)\mapsto\prt{g_0(x)-g(x)}\prt{\frac{2\, \ind{s=1}}{e(x)} - 1}.
\end{equation*}
Hence, denoting $\mathcal{F}(\sigma)=\crochets{f_g: g \in \S^\prime,\ d(g_0,g)\leq \sigma}$, we can write $W$ as the supremum of the empirical process $(\P_n-\P)(\cdot)$ over the set of functions $\mathcal{F}(\sigma)$:
\begin{equation}\label{prop3-step1}
    W = \sup_{f \in \mathcal{F}(\sigma)} (\P_n-\P)(f).
\end{equation}

\paragraph*{(ii)}\label{par2} For any $g \in \S^\prime$, we can decompose $f_g$ depending on the sign of $\prt{g_0(x)-g(x)}$:
\begin{equation*}
    f_g\prt{x,s} = \prt{\frac{2\, \ind{s=1}}{e(x)} - 1} \ind{g(x) > g_0(x)} - \prt{\frac{2\, \ind{s=1}}{e(x)} - 1} \ind{g_0(x) > g(x)}\ .
\end{equation*}
Then, introducing the following classes of functions
\begin{equation*}
    \begin{split}
        & \mathcal{F}^+(\sigma) = \crochets{f : \R^d \times \crochets{0,1} \rightarrow \R,\ \exists\ g \in \S^\prime,\ f(x,s) = \brkt{\frac{2 \ind{s=1}}{e(X)}-1}\ind{g(x)>g_0(x)} \mbox{ , } d(g_0,g)\leq \sigma}\\
& \mathcal{F}^-(\sigma) = \crochets{f : \R^d \times \crochets{0,1} \rightarrow \R,\ \exists\ g \in \S^\prime,\ f(x,s) = \brkt{\frac{2 \ind{s=1}}{e(X)}-1}\ind{g(x)<g_0(x)} \mbox{ , } d(g_0,g)\leq \sigma}\\
    \end{split}
\end{equation*}
and the corresponding suprema
\begin{equation*}
    \begin{split}
        & W^+ = \sup_{f \in \mathcal{F}^+(\sigma)} (\P_n-\P)(f) \\
        & W^- = \sup_{f \in \mathcal{F}^-(\sigma)} (\P-\P_n)(f),
    \end{split}
\end{equation*}
we decompose $\esp{W}$ as follows:
\begin{equation}\label{prop3-step2}
   \esp{W} \leq \esp{W^+} +  \esp{W^-}.
\end{equation}
We now process both terms separately focusing on $W^+$ (the proof for the other term is almost identical). 


\paragraph*{(iii)}\label{par3} We first apply a symetrization principle to provide an upper bound on $\esp{W^+}$ depending on a Rademacher average \citep[\cf][]{Bousquet2003Feb}:
\begin{equation}\label{prop3-step3}
    \esp{W^+} \leq \frac{2}{n}\,\esp{\sup_{f \in \mathcal{F}^+(\sigma)} \sum_{i=1}^n \varepsilon_i f(X_i,\Lab_i)}
\end{equation}
where $(\varepsilon_i)_{1\leq i\leq n}$ are i.i.d. Rademacher variables (\textit{i.e.} $\P\prt{\varepsilon_i = 1} = \P\prt{\varepsilon_i = -1}=\frac{1}{2}$).\\

\paragraph*{(iv)}\label{par4} Let $\delta^2 = \sup_{f \in \mathcal{F}^+(\sigma)}\P_n\prt{f^2} \vee \sigma^2$. We apply a chaining inequality (lemma A.2, \citealt{Massart2006Oct}) which gives us the following inequality:
\begin{equation}\label{eqw-1}
    \esp{W^+} \leq \frac{6}{\sqrt{n}} \esp{\delta\, \sum_{j=0}^{+\infty} 2^{-j} \sqrt{H\prt{2^{-j-1} \delta, \mathcal{F}_+(\sigma)}}}
\end{equation}
where $H$ is the universal entropy metric (\cf\  Appendix \ref{uem}).

Let $\mathcal{A}_+ = \crochets{\ind{g(x)>g_0(x)}, g \in \S^\prime}$, which can be consider as a set of classifiers and has VC dimension $V$ at most. Using the fact that $H\prt{\cdot, \mathcal{F}_+(\sigma)}$ is non-increasing (\cf\  Proposition \ref{decreasing}), we have $\forall j \geq 0$:
\begin{equation*}
   H\prt{2^{-j-1} \delta, \mathcal{F}_+(\sigma)} \leq H\prt{2^{-j-1} \sigma, \mathcal{F}_+(\sigma)} \label{eqh-a} \ . 
\end{equation*}

Applying Proposition \ref{contraction}, we obtain the following upper bound on the entropy of $\mathcal{F}_+(\sigma)$ in terms of the entropy of $\mathcal{A}_+$:
\begin{equation*}
    H\prt{2^{-j-1} \delta, \mathcal{F}_+(\sigma)} \leq H\prt{2^{-j-1} \frac{\sigma}{C_e},\ \mathcal{A}_+(\sigma)}\ . 
\end{equation*}

We are then in position to apply Haussler bound (Proposition \ref{haussler}), to get an upper bound on the entropy in terms of the VC dimension of $\mathcal{A}_+$ which is no more than $V$:

\begin{equation}\label{complexity}
   H\prt{2^{-j-1} \delta, \mathcal{F}_+(\sigma)} \leq \kappa\, V \prt{1+\log\prt{2^{j+1}\frac{C_e}{\sigma}\vee 1}} 
\end{equation}
for some absolute constant $\kappa>1$.

\paragraph*{(v)}\label{par5} Injecting \eqnref{complexity} in \eqnref{eqw-1}, we get:
\begin{subequations}
\begin{align}
    \esp{W^+} &\leq 6\sqrt{\frac{\kappa\, V}{n}} \brkt{\sum_{j=0}^{+\infty}2^{-j} \sqrt{1+\log\prt{2^{j+1}\frac{C_e}{\sigma}\vee 1}}}\, \esp{\delta} \notag \\
    &\leq C(\sigma)\, \sqrt{\frac{V}{n}} \esp{\delta} \label{eq-cs-a}\\
    &\leq C(\sigma)\, \sqrt{\frac{V}{n}} \sqrt{\esp{\delta^2}} \label{eq-cs-b}
\end{align}
\end{subequations}
where $C(\sigma)=12\, \prt{1+\log(2)}\sqrt{\kappa}\, \sqrt{1+\log\prt{\frac{C_e}{\sigma}\vee 1}}$. \eqref{eq-cs-a} is a consequence of technical Lemma \ref{lemma1} in Appendix \ref{technical}, \eqref{eq-cs-b} follows from Cauchy-Schwartz inequality.

Now, we provide an upper bound on $\esp{\delta^2}$ in terms of $\esp{W^+}$:
\begin{align}
    \esp{\delta^2} &\leq \sigma^2 + \esp{\sup_{f \in \mathcal{F}_+(\sigma)} \P_n\prt{f^2}} \nonumber \\
    & \leq \sigma^2 + C_e\, \esp{\sup_{f \in \mathcal{F}_+(\sigma)} \P_n\prt{f}} \nonumber\\
    & \leq \sigma^2 + C_e\, \esp{\sup_{f \in \mathcal{F}_+(\sigma)} (\P_n-\P)\prt{f}} + C_e\, \, \sup_{f \in \mathcal{F}_+(\sigma)} \P(f) \label{upper-delta}
\end{align}

Let $f \in \mathcal{F}_+(\sigma)$ and define $g \in \S^\prime$ such that $f(x,s) = \brkt{\frac{2\ind{s=1}}{e(x)}-1} \ind{g_0(x)>g(x)}$ (and $d(g_0, g)\leq \sigma$). We have:
\begin{align*}
    \P(f) 
    &= \esp{\esp{\frac{2\ind{\Lab=1}}{e(X)}-1 \vert X} \ind{g_0(X)>g(X)}} \\
    &= \esp{\prt{2 \eta(X) - 1} \ind{g_0(X)>g(X)}} \\
    &\leq \esp{\abs{g_0(X) - g(X)}^2} \\
    &= \frac{d^2(g_0, g)}{2\, C_e} \\
    &\leq \frac{\sigma^2}{2\, C_e} \ .
\end{align*}
using \eqnref{d-def} and the definition of $\mathcal{F}_+(\sigma)$.
Note that the above upper bound does not depend on $f\in \mathcal{F}_+(\sigma)$. Hence, we can use it in \eqnref{upper-delta} to obtain:
\begin{equation}\label{upper-delta2}
    \esp{\delta^2} \leq C_e\, \esp{W^+} + \frac{3}{2}\sigma^2 
\end{equation}

Hence, coming back to $\esp{W^+}$:
\begin{equation*}
\begin{split}
    \esp{W^+} &\leq C(\sigma)\, \sqrt{\frac{V}{n}}\, \sqrt{C_e\, \esp{W^+} + \frac{3}{2}\sigma^2}\ .
\end{split}
\end{equation*}
Taking the square on both sides and solving the second order inequation in $\esp{W^+}$ yields:
\begin{equation*}
    \esp{W^+} \leq \frac{1}{2}C(\sigma)\sqrt{\frac{V}{n}}\prt{C(\sigma)\, C_e \sqrt{\frac{V}{n}} + \sqrt{\frac{C(\sigma)^2\, C_e^2\, V}{n}  + 6 \sigma^2}}\ .
\end{equation*}

Therefore, whenever $\sigma\geq C(\sigma)\, C_e\, \sqrt{\frac{V}{n}} $:
\begin{equation*}
    \sqrt{n}\, \esp{W^+} \leq 2\, \sigma\, C(\sigma) \sqrt{V}\ .
\end{equation*}

We can prove a similar upper bound on $\esp{W^-}$. If we define $\Phi(\sigma) = 4\, \sigma\, C(\sigma) \sqrt{V}$, for all $\sigma$ such that $\Phi(\sigma)\leq \sqrt{n}\,\frac{\sigma^2}{C_e}$ (condition of Proposition \ref{prop3}):
\begin{equation*}
     \sigma\geq C(\sigma)\, C_e\sqrt{\frac{V}{n}}  \ .
\end{equation*}
Hence, we have the desired upper bound on $\esp{W}$:
\begin{equation*}
    \sqrt{n}\,\esp{W} \leq \Phi(\sigma)\ .
\end{equation*}
Note that the constant $K=4\,C(\sigma)$ is greater than $1$.
\end{proof}

\subsection{Upper bounds on the risk}\label{result1}
In the previous subsection, we checked that \thref{th3} can be applied to PU learning under SAR assumption. Hence, the upper bound on risk excess $\varepsilon_*^2$ is the unique solution to the fixed point equation:
\begin{equation}\label{fixed-point-eqn}
    \sqrt{n}\, \varepsilon_*^2 = \Phi\prt{w(\varepsilon_*)}
\end{equation}
where $w$ is given in \eqnref{w} (or $w_0$ in \eqnref{w0}) and $\Phi$ in \eqnref{phi}.
\begin{equation*}
    w(x) = \sqrt{\frac{2\, C_e}{h}}\, x\ ,
\end{equation*}
\begin{equation*}
    w_0(x) = \sqrt{2\, C_e} \vee x\,\sqrt{\frac{2\, C_e}{\hp}}\ ,
\end{equation*}
\begin{equation*}
    \Phi(\sigma) = K \sigma \sqrt{V \brkt{1 + \log\prt{\frac{C_e}{\sigma}\vee 1}}}\ .
\end{equation*}

We cannot explicitly solve this equation, but we can provide an upper bound on the solution which is enough to complete the proof of \thref{th1}. The choice of $w$ as \eqnref{w} or \eqnref{w0} leads to two different upper bounds (Subsections \ref{case1} and \ref{case2}) that together complete the proof of \thref{th1}.

\subsubsection{First case}\label{case1}

Using the known definitions of $w$ in \eqnref{w} and $\Phi$ in \eqnref{phi}, \eqnref{fixed-point-eqn} can be rewritten as:
\begin{equation}\label{eqfp}
    \sqrt{n}\, \varepsilon_*^2 = K\, \varepsilon_*\, \sqrt{\frac{2\, C_e}{h}}\, \sqrt{V\, \brkt{1+\log\prt{\frac{\sqrt{C_e\, h}}{\sqrt{2}\varepsilon_*}\vee 1}}}
\end{equation}
Because the $log$ term is always non negative and $K\geq 1$, we get:
\begin{equation*}
    \varepsilon_* \geq \sqrt{\frac{2\, C_e\, V}{n\, h}}\ .
\end{equation*}

Using this on the logarithmic term, we obtain the following upper bound on $\varepsilon_*$:
\begin{equation*}
\begin{split}
    \varepsilon_* &\leq K \sqrt{\frac{2\, C_e\, V}{n\, h}}\, \sqrt{1+\log\prt{\frac{\sqrt{n}\, h}{2\, \sqrt{V}}\vee 1}}\\
    &\leq K \sqrt{\frac{2\, C_e\, V}{n\, h}}\, \sqrt{1+\log\prt{\frac{n\, h^2}{V}\vee 1}}
\end{split}
\end{equation*}
Note that $C_e \leq \frac{2}{e_m}$. Finally, we get the desired result:
\begin{equation*}
    \varepsilon_*^2 \leq 4\, K^2\, \frac{V}{n\, h\, e_m}\brkt{1+\log\prt{\frac{n\, h^2}{V}\vee 1}}\ .
\end{equation*}
\begin{flushright}$\blacksquare$\end{flushright}

\subsubsection{Second case}\label{case2}
We now consider \eqnref{fixed-point-eqn} where $w$ is given by \eqnref{w0}. Note that the logarithmic term is necessarily $0$. If we assume that the solution $\varepsilon_*$ of \eqnref{fixed-point-eqn} satisfies $\varepsilon_* \geq \sqrt{\hp}$, then $w(x)=\varepsilon_*\, \sqrt{\frac{2\, C_e}{\hp}}$. We obtain:
\begin{equation*}
    \varepsilon_*^2 \leq 4\, K^2\, \sqrt{\frac{V}{n\, e_m}}\ .
\end{equation*}
Else, $\varepsilon_* \leq \sqrt{\hp}$ which implies that
\begin{equation*}
    \varepsilon_*^2 \leq \hp=\sqrt{\frac{V}{n\, e_m}}\ .
\end{equation*}
Both bounds provide the same convergence rate. 

Paragraphs \ref{case1} and \ref{case2} together complete the proof of \thref{th1}.

\section{Proof of minimax lower bounds}
We remind the reader that the minimax risk is defined as:
\begin{equation*}
    \Rsh = \inf_{\widehat{g}\in \S}\brkt{\sup_{\P \in \Psh} \esp{\ell\prt{\widehat{g}, g^*}}}\ .
\end{equation*}
The lower bound on minimax risk under is proved in \ssref{proofth2} for SCAR assumption (\cf\  \thref{th2}) and in \ssref{proofprop} for SAR assumption (\cf\  Proposition \ref{propmm}).

\subsection{Under SCAR assumption (proof of \thref{th2})}\label{proofth2}
The proof consists in exhibiting a finite subset of family distributions on which the excess risk is worst. It is organised as follows: \nameref{parb1} we provide a lower bound on the minimax risk expression by restricting ourselves to this subset of distributions; \nameref{parb2} we use Massart noise condition and simplify the remaining expression; \nameref{parb3} the application of Assouad lemma finishes the proof.\\

\paragraph*{(i)} \label{parb1} We start by introducing a family of probability distributions which plainly exploits the noise condition \cref{$A_2$}. Let $x_1,...,x_V$ be $V$ points of $\R^d$ shattered by $\S$. This is possible because the VC dimension of $\S$ is $V$. For some parameter $p<\frac{1}{V-1}$, we define a discrete probability distribution on $\{x_1,...,x_V\} \subset \R^d$ verifying:
\begin{equation*}
    \P\prt{X=x_i}=p\ \ \forall\, i\leq V-1 \text{ and }\P\prt{X=x_V}=1-p\,\prt{V-1}\ .
\end{equation*}
For some binary vector $b \in \crochets{0,1}^{V-1}$, we consider $\P_b$ the probability distribution such that:
\begin{equation*}
    \forall\, 1\leq i\leq V-1,\ \P_b\prt{\Class=1\,\vert\,X=x_i} = \frac{1}{2}\brkt{1+\prt{2\, b_i-1}\, h}
\end{equation*}
for $h>0$. We can consider by default that each point in $\R^d\setminus\crochets{x_1,...,x_{V-1}}$ has class $0$ almost surely. This has no incidence on the rest of the proof. Moreover:
\begin{equation*}
    \P_b\prt{\Lab=1 \, \vert\, X=x_i, \Class=y} = y\, e(x_i)
\end{equation*}
following the definition of propensity.

Hence, $\prt{\P_b}_{b\in \crochets{0,1}^{V-1}}$ defines a family of distributions on $(X,\Lab)$ that satisfies Massart noise condition \cref{$A_2$} at its limits: the regression function $\abs{2\eta(x_i)-1}$ equals $h$ for every $i \in \crochets{1,...V-1}$. Furthermore, for every $b\, \in \crochets{0,1}^{V-1}$, the Bayes classifier $g^*_b$ is known:
\begin{equation*}
    \forall\, 1\leq i\leq V-1,\ g^*_b(x_i)=b_i \ .
\end{equation*}
As $(x_1,...,x_V)$ is shattered by $\S$, $g^*_b$ necessarily belongs to $\S$.\\ Hence, $\prt{\P_b}_{b\in \crochets{0,1}^{V-1}} \subset \Psh$ and therefore:
\begin{equation*}
    \Rsh \geq \inf_{\widehat{g}\in \S}\brkt{\sup_{b \in \crochets{0,1}^{V-1}} \esps{b}{\ell\prt{\widehat{g}, g^*_b}}}
\end{equation*}
where $\mathbb{E}_b$ denotes the expectation according to $\P_b$ distribution.

\paragraph*{(ii)} \label{parb2} Let $\widehat{g}$ be a classifier, function of the training sample $\prt{X_i,\Lab_i}_{1\leq i\leq n}$. We use the following decomposition of $\ell$ (\cf\  \eqnref{eql}):
\begin{equation*}
    \ell(\widehat{g},g^*_b) = \esp{\abs{2\eta(X)-1}\abs{\widehat{g}(X)-g^*_b(X)}}\ .
\end{equation*}
Combined with Massart noise condition \cref{$A_2$}, this yields:
\begin{equation*}
    \Rsh \geq h\, \inf_{\widehat{g}\in \S}\brkt{\sup_{b \in \crochets{0,1}^{V-1}} \esps{b}{\abs{\widehat{g}(X)-s_b^*(X)}}}
\end{equation*}

For every $\widehat{g}$, we define $\widehat{b}$ such that:
\begin{equation*}
\widehat{b} = \argmin_{b \in \crochets{0,1}^{V-1}} \esps{X}{\abs{g^*_b(X)-\widehat{g}(X)}}
\end{equation*}
where the expectation is taken with respect to the marginal distribution of $X$ and conditional to the training sample. Hence, $\widehat{b}$ is a function of the training sample $\prt{X_i,\Lab_i}_{1\leq i\leq n}$.
By triangular inequality and then by definition of $\widehat{b}$:
\begin{equation*}
    \abs{g^*_{\widehat{b}}(X)-g^*_b(X)} \leq \abs{g^*_{\widehat{b}}(X)-\widehat{g}(X)} + \abs{\widehat{g}(X)-g^*_b(X)} \leq 2\,\abs{\widehat{g}(X)-g^*_b(X)}\ .
\end{equation*}
Hence:
\begin{equation*}
    \begin{split}
        \Rsh &\geq \frac{h}{2}\, \inf_{\widehat{g}\in \S}\brkt{\sup_{b \in \crochets{0,1}^{V-1}} \esps{b}{\abs{g^*_{\widehat{b}}(X)-s_b^*(X)}}} \\
        &= \frac{h}{2}\, \inf_{\widehat{b}\in \crochets{0,1}^{V-1}}\brkt{\sup_{b \in \crochets{0,1}^{V-1}} \esps{b}{\abs{g^*_{\widehat{b}}(X)-s_b^*(X)}}} \\
        &= \frac{p\, h}{2} \inf_{\widehat{b}\in \crochets{0,1}^{V-1}}\brkt{\sup_{b \in \crochets{0,1}^{V-1}} \esps{b}{\sum_{i=1}^{V-1} \ind{b_i \neq \widehat{b}_i}}} \label{minimax-simplified}
    \end{split}
\end{equation*}
where the last line is obtained by developing the expectation according to the marginal distribution of $X$ which is discrete. \\

\paragraph*{(iii)} \label{parb3} With this simplified expression, we apply Assouad lemma \citep[\cf][]{Yu1997} which provides the following general lower bound:
\begin{equation}\label{assouad}
    \inf_{\widehat{b}\in \crochets{0,1}^{V-1}}\brkt{\sup_{b \in \crochets{0,1}^{V-1}} \esps{b}{\sum_{i=1}^{V-1} \ind{b_i \neq \widehat{b}_i}}} \geq \frac{V-1}{2}\prt{1-\sqrt{\gamma\, n}}
\end{equation}
where $\gamma$ is an upper bound on the square Hellinger distance between probability distributions $\P_b$ and $\P_{b}^\prime$ on $(X,\Lab)$ when $b$ and $b^\prime$ only differ on one coordinate. Using technical Lemma \ref{lemma2} in Appendix \ref{technical}, we have the following upper bound on the square Hellinger distance $\mathcal{H}^2\prt{\P_b, \P_{b^\prime}}$:
\begin{equation}\label{hellinger}
    \mathcal{H}^2\prt{\P_b, \P_{b^\prime}} \leq 2\,p\,e_m\,h^2\ .
\end{equation}

Applying Assouad lemma together with \eqnref{hellinger}, we get the following inequality:
\begin{equation*}
    \Rsh \geq \frac{p\, h}{4}\prt{V-1}\prt{1-\sqrt{2\,p\,e_m\,h^2\,n}}\ .
\end{equation*}

In case \cref{$C_1$}, we choose $p=\frac{2}{9\,e_m\,h^2\,n}$ that is lower than $\frac{1}{V-1}$, we obtain the desired lower bound on $\Rsh$:
\begin{equation*}
    \Rsh \geq \frac{V-1}{54\,e_m\,h\,n}\ .
\end{equation*}
Else, in case \cref{$C_2$}, we choose $p = \frac{2}{9\,e_m\,{\hp}^2\,n}$ where we recall that $\hp = \sqrt{\frac{V}{n\, e_m}}$. As $h \leq \hp$:
\begin{equation*}
    \Rsh \geq \Rshp \geq \frac{V-1}{54\,e_m\,\hp\,n} \geq \frac{1}{54\, \sqrt{2}}\sqrt{\frac{V-1}{n\,e_m}}\ .
\end{equation*}

\begin{flushright}$\blacksquare$\end{flushright}

\subsection{Proof of Proposition \ref{propmm}}\label{proofprop}
This proof relies on the same tools as SCAR assumption case. We alter \nameref{parb1} by choosing $x_1,...,x_V$ satisfying assumption \cref{$A_3$} for $\varepsilon > 0$. \nameref{parb2} remains unchanged. In \nameref{parb3}, the upper bound in \eqnref{hellinger} has to be replaced but $2\,p\, h^2\, \prt{e_m + \varepsilon}$. This yields the following lower bounds:
\begin{enumerate}
\item in case \cref{$C_1$}:
\begin{equation*}
    \Rsh \geq \frac{V-1}{54\,\prt{e_m+\varepsilon}\,h\,n}\ ;
\end{equation*}
\item in case \cref{$C_2$}:
\begin{equation*}
    \Rsh \geq \frac{1}{54\, \sqrt{2}}\sqrt{\frac{V-1}{\prt{e_m+\varepsilon}\,h\,n}}\ .
\end{equation*}
\end{enumerate}
It remains to note that these lower bounds are valid for any $\varepsilon>0$ to complete the proof.\begin{flushright}$\blacksquare$\end{flushright}

\section{Universal entropy metric and related properties}\label{uem}
In this section, we recall some definitions and properties concerning the universal entropy metric. These properties are used for the proof of Proposition \ref{prop3} in Appendix \ref{proofth1}.

Let us consider $(X_i, \Lab_i)_{1\leq i \leq n}$ i.i.d. random variables with values in $\R^d\times\crochets{0,1}$ and $\mathcal{F}$ a set of functions on $\R^d\times\crochets{0,1}$.

\begin{definition}[Universal entropy metric, \cf\ \citealt{Massart2006Oct}]~\\
Let $\varepsilon > 0$ and $\mathbb{Q}$ be a probability measure.\\ Define $h\prt{\mathcal{F}, \varepsilon, \mathbb{Q}}$ as the logarithm of the largest number $N$ of functions $f_1,...,f_N$ separated by a distance $\varepsilon$, namely $\esps{\mathbb{Q}}{\prt{f_i(X,\Lab)-f_j(X,\Lab)}^2}>\varepsilon^2$, $\forall i\neq j$.\\
Then the universal entropy metric $H\prt{\mathcal{F}, \varepsilon}$ is defined as:
\begin{equation}\label{entropy-def}
    H\prt{\mathcal{F}, \varepsilon} = \sup_{\mathbb{Q}}h\prt{\mathcal{F}, \varepsilon, \mathbb{Q}} \ .
\end{equation}
\end{definition}

\begin{proposition}\label{decreasing}
For a fixed $\mathcal{F}$, $H\prt{\mathcal{F}, \cdot}$ is a decreasing function.
\end{proposition}

\begin{proposition}\label{contraction}
Let $\psi$ be a function defined on $\R^d\times\crochets{0,1}$ and $\mathcal{F}$ be a family of functions such that:
\begin{equation*}
    \mathcal{F}=\crochets{(x,s)\mapsto \psi(x,s)\, g(x,s) \ ,\ g \in \mathcal{G}}\ 
\end{equation*}
where $\mathcal{G}$ is another family of functions on $\R^d\times\crochets{0,1}$.
Then:
\begin{equation*}
    \forall \varepsilon>0,\ H\prt{\mathcal{F},\varepsilon} \leq H\prt{\mathcal{G},\frac{\varepsilon}{\abs{\abs{\psi}}_{\infty}}} \ .
\end{equation*}
\end{proposition}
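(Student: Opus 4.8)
The plan is to read off Proposition~\ref{contraction} directly from the definition \eqnref{entropy-def} of the universal entropy metric as a supremum, over probability measures $\mathbb{Q}$, of logarithms of $L^2(\mathbb{Q})$ packing numbers. The point is that the map $g\mapsto \psi\, g$ can shrink $L^2(\mathbb{Q})$ distances by at most a factor $\abs{\abs{\psi}}_\infty$, so any $\varepsilon$-separated family in $\mathcal{F}$ pulls back, against the \emph{same} $\mathbb{Q}$, to a $\prt{\varepsilon/\abs{\abs{\psi}}_\infty}$-separated family in $\mathcal{G}$. The whole argument is then a change of variables at the level of packing sets, followed by bookkeeping with the two suprema (over packings, and over $\mathbb{Q}$).

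First I would fix an arbitrary probability measure $\mathbb{Q}$ on $\R^d\times\crochets{0,1}$ and an arbitrary finite family $f_1,\dots,f_N\in\mathcal{F}$ that is $\varepsilon$-separated in $L^2(\mathbb{Q})$, that is $\esps{\mathbb{Q}}{\prt{f_i(X,\Lab)-f_j(X,\Lab)}^2}>\varepsilon^2$ for all $i\neq j$. By the structural assumption on $\mathcal{F}$, each $f_i$ can be written $f_i=\psi\, g_i$ with $g_i\in\mathcal{G}$. The only estimate needed is the pointwise bound $\psi(x,s)^2\leq \abs{\abs{\psi}}_\infty^2$, which gives, for all $i\neq j$,
\begin{equation*}
\varepsilon^2 < \esps{\mathbb{Q}}{\psi(X,\Lab)^2\prt{g_i(X,\Lab)-g_j(X,\Lab)}^2} \leq \abs{\abs{\psi}}_\infty^2\,\esps{\mathbb{Q}}{\prt{g_i(X,\Lab)-g_j(X,\Lab)}^2}\ .
\end{equation*}
Hence $g_1,\dots,g_N$ are pairwise $\prt{\varepsilon/\abs{\abs{\psi}}_\infty}$-separated in $L^2(\mathbb{Q})$; in particular they are pairwise distinct, so they form a legitimate packing of $\mathcal{G}$ and $\log N\leq h\prt{\mathcal{G},\varepsilon/\abs{\abs{\psi}}_\infty,\mathbb{Q}}$.

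Next I would take the supremum over all such finite families (equivalently, let $N$ be as large as possible) to obtain $h\prt{\mathcal{F},\varepsilon,\mathbb{Q}}\leq h\prt{\mathcal{G},\varepsilon/\abs{\abs{\psi}}_\infty,\mathbb{Q}}$, and bound the right-hand side by $H\prt{\mathcal{G},\varepsilon/\abs{\abs{\psi}}_\infty}$ using the definition of $H$ as a supremum over probability measures. Since the inequality $h\prt{\mathcal{F},\varepsilon,\mathbb{Q}}\leq H\prt{\mathcal{G},\varepsilon/\abs{\abs{\psi}}_\infty}$ holds for every $\mathbb{Q}$, taking the supremum over $\mathbb{Q}$ on the left-hand side gives $H\prt{\mathcal{F},\varepsilon}\leq H\prt{\mathcal{G},\varepsilon/\abs{\abs{\psi}}_\infty}$, which is the claim.

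There is no genuine obstacle here. The only points that call for a little care are: (a) checking that the transported family $g_1,\dots,g_N$ is really separated, hence distinct, so that it can be counted in a packing of $\mathcal{G}$ — which is immediate from the strict inequality above; and (b) applying the two suprema in the right order, namely taking the supremum over $\mathbb{Q}$ on the $\mathcal{F}$-side only after the $\mathcal{G}$-side has been dominated by $H\prt{\mathcal{G},\cdot}$ uniformly in $\mathbb{Q}$. One also implicitly assumes $\abs{\abs{\psi}}_\infty<\infty$, without which the statement is vacuous.
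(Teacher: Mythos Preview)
Your proof is correct and follows essentially the same approach as the paper: both arguments use the pointwise bound $\psi^2\leq\abs{\abs{\psi}}_\infty^2$ to compare $L^2(\mathbb{Q})$ distances in $\mathcal{F}$ and $\mathcal{G}$, establish $h\prt{\mathcal{F},\varepsilon,\mathbb{Q}}\leq h\prt{\mathcal{G},\varepsilon/\abs{\abs{\psi}}_\infty,\mathbb{Q}}$ for each fixed $\mathbb{Q}$, and then take the supremum. The only cosmetic difference is that the paper phrases the packing comparison contrapositively while you phrase it directly; your version is arguably cleaner, and your explicit remarks on distinctness of the $g_i$ and on the order of the two suprema make the bookkeeping more transparent than in the paper.
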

\begin{proof}
Let $\Q$ be a probability distribution and $N$ such that $h\prt{\G, \frac{\varepsilon}{\abs{\abs{\psi}}_{\infty}},\Q} < \log(N)$. Then, for any set of functions $g_1,...,g_N$, there is $i\neq j$ such that $\esps{\Q}{\prt{g_i(X,\Lab)-g_j(X,\Lab)}^2}\leq \prt{\frac{\varepsilon}{\abs{\abs{\psi}}_{\infty}}}^2$. This implies that $\esps{\Q}{\prt{\psi(X,\Lab)\, \brkt{g_i(X,\Lab)-g_j(X,\Lab)}}^2}\leq \varepsilon^2$ and then that $h\prt{\F, \varepsilon, \Q}< \log(N)$.\\
Then, we have that $h\prt{\F, \varepsilon, \Q}\leq h\prt{\G, \frac{\varepsilon}{\abs{\abs{\psi}}_{\infty}},\Q}$. Considering the supremum over the probability distributions $\Q$, we obtain the desired result.
\end{proof}

Finally, we recall Haussler bound which provides an upper bound on the universal entropy metric of a set of classifiers in terms of its VC dimension.

\begin{proposition}[Haussler bound, \cf\ \citealt{Bousquet2003Feb}]\label{haussler}~\\
Assuming that $\F$ is a set of indicator functions with finite Vapnik dimension $V$. Then, $\forall \varepsilon > 0$:
\begin{equation*}
    H\prt{\F, \varepsilon} \leq \kappa\, V\prt{1 + \log\prt{\varepsilon^{-1}\vee 1}}
\end{equation*}
where $\kappa\geq 1$ is a absolute constant.
\end{proposition}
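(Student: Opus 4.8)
The plan is to fix an arbitrary probability measure $\Q$ on $\R^d\times\crochets{0,1}$, bound the size $N$ of any $\varepsilon$-separated family $f_1,\dots,f_N\in\F$ in $L_2(\Q)$ by a quantity that does not depend on $\Q$, and then pass to the supremum over $\Q$ in the definition \eqnref{entropy-def} of $H$. The first observation is that, since the elements of $\F$ take values in $\crochets{0,1}$, one has $\esps{\Q}{\prt{f_i-f_j}^2}=\esps{\Q}{\abs{f_i-f_j}}=\Q\prt{f_i\neq f_j}$, so the separation hypothesis reads $\Q\prt{f_i\neq f_j}>\varepsilon^2$ for every $i\neq j$. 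When $\varepsilon\geq 1$ this is impossible for two distinct functions, so $N=1$ and $H(\F,\varepsilon)=0$; this explains the truncation at $1$ in the statement, and from now on one may assume $\varepsilon<1$ and $N\geq 2$.

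The core step would be a random subsampling argument. Draw $Z_1,\dots,Z_m$ i.i.d.\ from $\Q$. For a fixed pair, the probability that $f_i$ and $f_j$ coincide on all $m$ draws is $\prt{1-\Q\prt{f_i\neq f_j}}^m\leq e^{-m\varepsilon^2}$, so a union bound over the $\binom{N}{2}$ pairs gives that, with probability at least $1-\tfrac12 N^2 e^{-m\varepsilon^2}$, the functions $f_1,\dots,f_N$ are pairwise distinct once restricted to $\crochets{Z_1,\dots,Z_m}$. Choosing $m=\lceil 2\log N/\varepsilon^2\rceil$ makes this probability positive, hence there exist $m$ points on which $\F$ realizes at least $N$ distinct patterns. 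If $m<V$ the bound is immediate since then $N\leq 2^m$; otherwise Sauer's lemma gives $N\leq\prt{em/V}^V$.

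It would then remain to unwind $N\leq(em/V)^V$ together with $m\leq 2\log N/\varepsilon^2+1$. Writing $L=\log N$, this reads $L\leq V\log(em/V)\leq V\prt{1+\log\prt{4L/\varepsilon^2}}-V\log V=V\prt{1+\log 4+2\log(1/\varepsilon)}+V\log(L/V)$, and since $\log x\leq x/e$ the last term is at most $L/e$; solving the resulting affine inequality in $L$ yields $L\leq\kappa\,V\prt{1+\log(1/\varepsilon)}$ for an absolute constant $\kappa$. As this bound does not involve $\Q$, passing to the supremum gives $H(\F,\varepsilon)\leq\kappa\,V\prt{1+\log(\varepsilon^{-1}\vee 1)}$, which is the claim.

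The step I expect to be the most delicate is the last one: one has to check that the $-\log V$ produced by Sauer's estimate $(em/V)^V$ exactly compensates the $\log V$ that appears through $\log m\approx\log(\log N/\varepsilon^2)$, so that no spurious dimensional factor $\log V$ survives in the final bound; together with this, calibrating the sample size to $m\asymp\varepsilon^{-2}\log N$ is what produces the $L_2$ exponent (a separation at scale $\varepsilon$ corresponding to a mass $\varepsilon^2$). Everything else — the indicator identity, Sauer's lemma, and the degenerate cases $\varepsilon\geq 1$ and $m<V$ — is routine. Alternatively, one can avoid the subsampling argument entirely and simply invoke Haussler's sharp packing bound for VC classes, as recalled in \citet{Bousquet2003Feb}, taking logarithms and using $\log(V+1)\leq V$ to absorb the dimensional term.
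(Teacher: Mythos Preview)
The paper does not actually prove this proposition: it is stated as a known result with a citation to \citet{Bousquet2003Feb}, and no argument is given. So there is nothing in the paper to compare your proposal against at the level of proof.

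That said, your proposal is a correct self-contained derivation. The random subsampling argument (draw $m\asymp \varepsilon^{-2}\log N$ points from $\Q$, use a union bound to force the $N$ functions to be pairwise distinct on the sample, then apply Sauer's lemma) is a standard route to Haussler-type bounds, and your treatment of the key algebraic step --- cancelling the $\log V$ from $(em/V)^V$ against the $\log V$ hidden in $\log m$ via $\log(L/V)\leq L/(eV)$ --- is exactly the right idea. The handling of the edge cases ($\varepsilon\geq 1$, and $m<V$ where $m\geq 2\varepsilon^{-2}\log N$ directly gives $\log N<V/2$) is also fine. The alternative you mention at the end --- simply invoking Haussler's packing bound and taking logarithms --- is in fact what the paper does implicitly by citing the result.
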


\section{Technical lemmas}\label{technical}
\begin{lemma}\label{lemma1}
Let $C_e>1$ and $\sigma>0$. Then:
\begin{equation*}
    \sum_{j=0}^{+\infty} 2^{-j} \sqrt{1 + \log\prt{2^{j+1} \frac{C_e}{\sigma} \vee 1}} \leq 2\,\prt{1+\log(2)}\sqrt{1 + \log\prt{\frac{C_e}{\sigma} \vee 1}}
\end{equation*}
\end{lemma}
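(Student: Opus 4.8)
The plan is to bound the infinite sum by splitting off the dependence on $j$ from the constant term $1+\log(C_e/\sigma \vee 1)$. Observe first that $\log(2^{j+1}\,C_e/\sigma \vee 1) \le (j+1)\log 2 + \log(C_e/\sigma \vee 1)$ always, since $2^{j+1}\,C_e/\sigma \vee 1 \le 2^{j+1}(C_e/\sigma \vee 1)$ and $2^{j+1}\ge 1$. Hence
\begin{equation*}
    1 + \log\prt{2^{j+1}\tfrac{C_e}{\sigma}\vee 1} \le \prt{1+\log\prt{\tfrac{C_e}{\sigma}\vee 1}} + (j+1)\log 2 \le \prt{1+(j+1)\log 2}\,\prt{1+\log\prt{\tfrac{C_e}{\sigma}\vee 1}},
\end{equation*}
where the last inequality uses $1 + \log(C_e/\sigma \vee 1) \ge 1$. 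Taking square roots and using $\sqrt{a+b}\le \sqrt a + \sqrt b$ (or directly $\sqrt{1+(j+1)\log 2}\le 1 + \sqrt{(j+1)\log 2}$), we factor out $\sqrt{1+\log(C_e/\sigma\vee 1)}$ and are left with the purely numerical series $\sum_{j\ge 0} 2^{-j}\sqrt{1+(j+1)\log 2}$.

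The second step is to bound this numerical series by $2(1+\log 2)$. I would use $\sqrt{1+(j+1)\log 2} \le 1 + (j+1)\log 2$ is too lossy; instead note $\sqrt{1+(j+1)\log 2}\le \sqrt{1+\log 2}\,\sqrt{j+1}$ (valid since $1+(j+1)\log 2 \le (1+\log 2)(j+1)$ for $j\ge 0$), so the series is at most $\sqrt{1+\log 2}\sum_{j\ge 0}2^{-j}\sqrt{j+1}$. One then checks $\sum_{j\ge 0} 2^{-j}\sqrt{j+1}\le 2\sqrt{1+\log 2}$ by a direct numerical estimate (the sum equals roughly $3.3$, and $2\sqrt{1+\log 2}\approx 2.60$ — so this particular split is slightly too weak and one must be more careful). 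A cleaner route: bound $\sqrt{1+(j+1)\log 2}\le (1+\log 2)^{1/2}$ for $j=0$ and handle the tail via $\sqrt{1+(j+1)\log 2}\le \sqrt{(j+2)\log 2}\cdot\sqrt{\tfrac{1+\log 2}{\log 2}}$ type manipulations, or simply verify the stated inequality $\sum_{j\ge 0}2^{-j}\sqrt{1+(j+1)\log2}\le 2(1+\log 2)$ numerically term by term, since both sides are explicit constants ($\text{LHS}\approx 4.0$, $\text{RHS}\approx 3.39$ — again this shows the naive factoring overshoots, so the actual proof must exploit that $1+\log(C_e/\sigma\vee 1)\ge 1$ less wastefully, e.g. keeping $(j+1)\log 2$ additively and comparing $\sqrt{A+(j+1)\log 2}$ with $\sqrt A + \sqrt{(j+1)\log 2}$ where $A = 1+\log(C_e/\sigma\vee1)$, then bounding $\sum 2^{-j}(\sqrt A + \sqrt{(j+1)\log2}) = 2\sqrt A + \log2^{1/2}\sum 2^{-j}\sqrt{j+1}$ and absorbing the second term into the first using $A\ge 1$).

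The main obstacle is purely arithmetic: getting the numerical constants to line up with the clean bound $2(1+\log 2)$ claimed in the statement. The additive split $\sqrt{A + (j+1)\log 2} \le \sqrt A + \sqrt{(j+1)\log 2}$ gives $\sum_j 2^{-j}\sqrt{A+(j+1)\log2} \le 2\sqrt A + \sqrt{\log 2}\,S$ with $S=\sum_{j\ge0}2^{-j}\sqrt{j+1}$; since $S\sqrt{\log 2}$ is an absolute constant and $\sqrt A \ge 1$, one bounds $2\sqrt A + \sqrt{\log2}\,S \le (2 + \sqrt{\log2}\,S)\sqrt A \le (2+\sqrt{\log 2}\,S)A^{1/2}$, and it remains only to check $2+\sqrt{\log 2}\,S \le 2(1+\log 2)$, i.e. $\sqrt{\log 2}\,S \le 2\log 2$, i.e. $S \le 2\sqrt{\log 2}\approx 1.665$ — which fails, so the intended proof almost certainly keeps things as $\sqrt{A}$ and absorbs constants differently, perhaps tolerating a final constant larger than $2(1+\log2)$ and the statement should read "$\le C(1+\log(C_e/\sigma\vee1))^{1/2}$ for an absolute constant $C$". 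I would therefore carry out the additive split, extract the factor $\sqrt{1+\log(C_e/\sigma\vee1)}$, reduce everything to the convergent geometric-type series $\sum_j 2^{-j}\sqrt{j+1}$, evaluate or crudely bound it, and collect the resulting absolute constant, flagging that the precise numerical value $2(1+\log 2)$ is whatever this bookkeeping produces.
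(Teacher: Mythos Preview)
Your first two moves are exactly what the paper does: bound $\log(2^{j+1}C_e/\sigma\vee 1)\le (j+1)\log 2+\log(C_e/\sigma\vee 1)$, and then use the multiplicative inequality $1+a+b\le(1+a)(1+b)$ (for $a,b\ge 0$) to factor
\[
\sqrt{1+(j+1)\log 2+\log(C_e/\sigma\vee 1)}\le \sqrt{1+(j+1)\log 2}\,\sqrt{1+\log(C_e/\sigma\vee 1)}.
\]
Where you go off track is in bounding the remaining numerical series $\sum_{j\ge 0}2^{-j}\sqrt{1+(j+1)\log 2}$. The bounds you try --- subadditivity of the square root, or $\sqrt{1+(j+1)\log 2}\le\sqrt{1+\log 2}\,\sqrt{j+1}$ --- are all too lossy and lead you to conclude (incorrectly) that the constant $2(1+\log 2)$ is unattainable.

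The step you are missing is simply $\sqrt{1+x}\le 1+\tfrac{x}{2}$ for $x\ge 0$, applied with $x=(j+1)\log 2$. This gives
\[
\sum_{j\ge 0}2^{-j}\sqrt{1+(j+1)\log 2}\le \sum_{j\ge 0}2^{-j}\Bigl(1+\tfrac{(j+1)\log 2}{2}\Bigr)=2+\tfrac{\log 2}{2}\cdot 4=2(1+\log 2),
\]
using $\sum_{j\ge 0}2^{-j}=2$ and $\sum_{j\ge 0}(j+1)2^{-j}=4$. So the stated constant is exact, and your doubt about it is unfounded; the proof is a three-line computation once you pick the right elementary inequality.
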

\begin{proof}
\begin{subequations}
\begin{align*}
    \sum_{j=0}^{+\infty} 2^{-j} \sqrt{1 + \log\prt{2^{j+1} \frac{C_e}{\sigma} \vee 1}} & \leq \sum_{j=0}^{+\infty} 2^{-j} \sqrt{1 + (j+1)\log(2) + \log\prt{\frac{C_e}{\sigma} \vee 1}}  \\
    &\leq \sum_{j=0}^{+\infty} 2^{-j} \sqrt{1+(j+1) \log(2)}\sqrt{1+\log\prt{\frac{C_e}{\sigma} \vee 1}}  \\
    &\leq \sum_{j=0}^{+\infty} 2^{-j} \prt{1+(j+1) \frac{\log(2)}{2}}\sqrt{1+\log\prt{\frac{C_e}{\sigma} \vee 1}} \\
    &= 2\,\prt{1+\log(2)} \sqrt{1 + \log\prt{\frac{C_e}{\sigma} \vee 1}} 
\end{align*}
\end{subequations}
\end{proof}

\begin{lemma}\label{lemma2}
Let $x_1,...,x_{V}$ be vectors of $\R^d$. Let $e$ be a function on $R^d$ with values in $(0,1]$. Let $p\leq \frac{1}{V-1}$ and consider $\prt{\P_b}_{b\in \crochets{0,1}^{V-1}}$ the family of probability distributions on $\crochets{x_1,...,x_V}\times\crochets{0,1}$ defined in \nameref{par1} (\cf\ Appendix \ref{proofth2}).
If $b$ and $b^\prime$ are binary vectors of $\crochets{0,1}^{V-1}$ which only differ at coordinate $i$, then:
\begin{equation*}
\mathcal{H}\prt{\P_b,\P_{b^\prime}} \leq 2\,p\,e(x_i)\, h^2\ .
\end{equation*}
\end{lemma}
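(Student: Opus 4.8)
The plan is to reduce the squared Hellinger distance to the contribution of the single atom $x_i$. Since $b$ and $b^\prime$ differ only in coordinate $i$, the conditional law of the observed pair $(X,\Lab)$ given $X=x_j$ is the same under $\P_b$ and $\P_{b^\prime}$ for every $j\neq i$ (including $j=V$, where $\Lab=0$ almost surely and nothing depends on $b$), and the marginal of $X$ is the common discrete law with $\P\prt{X=x_i}=p$. Writing the discrete form $\mathcal{H}^2(P,Q)=\sum_\omega\prt{\sqrt{P(\omega)}-\sqrt{Q(\omega)}}^2$, all terms away from $x_i$ vanish, leaving $\mathcal{H}^2\prt{\P_b,\P_{b^\prime}} = p\,\brkt{\prt{\sqrt{a_b}-\sqrt{a_{b^\prime}}}^2 + \prt{\sqrt{1-a_b}-\sqrt{1-a_{b^\prime}}}^2}$, where $a_b=\P_b\prt{\Lab=1\mid X=x_i}$.

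Next I would compute $a_b$ explicitly. From $\P_b\prt{\Lab=1\mid X=x_i,\Class=y}=y\,e(x_i)$ and $\P_b\prt{\Class=1\mid X=x_i}=\tfrac{1}{2}\prt{1+\prt{2b_i-1}h}$ one gets $a_b=e(x_i)\,\tfrac{1}{2}\prt{1+\prt{2b_i-1}h}$. Taking, without loss of generality, $b_i=1$ and $b^\prime_i=0$, this produces the two identities that make the estimate work: $a_b+a_{b^\prime}=e(x_i)$ and $a_b-a_{b^\prime}=e(x_i)\,h$, hence also $(1-a_b)+(1-a_{b^\prime})=2-e(x_i)$.

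Then I would bound each of the two squares with the elementary inequality $\prt{\sqrt u-\sqrt v}^2 = (u-v)^2/\prt{\sqrt u+\sqrt v}^2 \leq (u-v)^2/(u+v)$. For the first square this gives $\prt{\sqrt{a_b}-\sqrt{a_{b^\prime}}}^2 \leq e(x_i)^2h^2/e(x_i)=e(x_i)h^2$; for the second, since $e$ takes values in $(0,1]$ so that $2-e(x_i)\geq 1$, it gives $\prt{\sqrt{1-a_b}-\sqrt{1-a_{b^\prime}}}^2 \leq e(x_i)^2h^2 \leq e(x_i)h^2$. Summing the two bounds and multiplying by $p$ yields $\mathcal{H}^2\prt{\P_b,\P_{b^\prime}}\leq 2\,p\,e(x_i)\,h^2$, which is the asserted bound (with $\mathcal{H}$ in the statement understood as the squared Hellinger distance, consistently with its use in \eqnref{hellinger}).

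I do not expect a genuine difficulty here; the only points requiring care are that one must work with the law of the observed pair $(X,\Lab)$ rather than of $(X,\Class)$, and that the cancellations $a_b+a_{b^\prime}=e(x_i)$ and $a_b-a_{b^\prime}=e(x_i)h$ must be exploited. The gain of a full power of $e(x_i)$ in the first square — coming from $\prt{\sqrt{a_b}+\sqrt{a_{b^\prime}}}^2\geq a_b+a_{b^\prime}$ — is precisely what later yields the $e_m$ (respectively $e_m+\varepsilon$) factor in the minimax lower bound of \thref{th2} and Proposition \ref{propmm}.
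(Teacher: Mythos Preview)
Your proof is correct and reaches the stated bound. The reduction to the single atom $x_i$, the computation of $a_b$, and the two applications of $(\sqrt u-\sqrt v)^2\leq (u-v)^2/(u+v)$ are all valid; your observation that $a_b+a_{b^\prime}=e(x_i)$ is exactly what delivers the factor $e(x_i)$.

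The route differs from the paper's. The paper computes each of the two squares exactly, obtaining expressions in $\sqrt{1-h^2}$, then combines them into a closed form $p\brkt{1-\tfrac{e(x_i)}{2}\sqrt{1-h^2}-\sqrt{(1-\tfrac{e(x_i)}{2}\sqrt{1-h^2})^2-e(x_i)(1-\sqrt{1-h^2})}}$, and only at the end bounds via $1-\sqrt{1-h^2}\leq h^2$. Your argument bypasses this algebra by bounding each square directly, which is shorter and more transparent. One minor point: the paper uses the convention $\mathcal{H}^2(P,Q)=\tfrac{1}{2}\sum(\sqrt P-\sqrt Q)^2$, whereas you use the convention without the $\tfrac12$; with the paper's convention your argument would actually yield the sharper bound $p\,e(x_i)\,h^2$, so the discrepancy is harmless (and in your favor).
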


\begin{proof}
\begin{subequations}
Recall that $b$ and $b^\prime$ only differ at coordinate $i$, hence:
\allowdisplaybreaks
\begin{align}
    \mathcal{H}^2\prt{\P_b, \P_{b^\prime}} &= \frac{1}{2}\sum_{j=1}^{V} \prt{\sqrt{\P_b\prt{X=x_j,\ \Lab=1}} - \sqrt{\P_{b^\prime}\prt{X=x_j,\ \Lab=1}}}^2 \nonumber\\ 
&+ \frac{1}{2}\sum_{j=1}^{V}\prt{\sqrt{\P_b\prt{X=x_j,\ \Lab=0}} - \sqrt{\P_{b^\prime}\prt{X=x_j,\ \Lab=0}}}^2 \nonumber\\
&= \frac{1}{2} \prt{\sqrt{\P_b\prt{X=x_i,\ \Lab=1}} - \sqrt{\P_{b^\prime}\prt{X=x_i,\ \Lab=1}}}^2 \label{eq-h-p1}\\
&+ \frac{1}{2} \prt{\sqrt{\P_b\prt{X=x_i,\ \Lab=0}} - \sqrt{\P_{b^\prime}\prt{X=x_i,\ \Lab=0}}}^2\ . \label{eq-h-p2}
\end{align}
\end{subequations}

Let us now calculate the probabilities using the definition of $\P_b$:
\begin{subequations}
\begin{align*}
    & \P_b\prt{X=x_i, S=1} = p\, \frac{e(x_i)}{2}\brkt{1+\prt{2 b_i - 1}\, h}\ ,\\
    & \P_b\prt{X=x_i, S=0} = p\, \prt{1-\frac{e(x_i)}{2}\brkt{1+\prt{2 b_i - 1}\, h}}\ ,\\
\end{align*}
\end{subequations}

Noting that either $(b_i,b^\prime_i) = (0,1)$ or $(b_i,b^\prime_i) = (1,0)$, we have in both cases:
\begin{equation*}
    \prt{\sqrt{\P_b\prt{X=x_i,\ \Lab=1}} - \sqrt{\P_{b^\prime}\prt{X=x_i,\ \Lab=1}}}^2 = p\, e(x_i)\, \brkt{1 - \sqrt{1-h^2}}\ ,
\end{equation*}
and \eqnref{eq-h-p2},
\begin{align*}
    &\prt{\sqrt{\P_b\prt{X=x_i,\ \Lab=0}} - \sqrt{\P_{b^\prime}\prt{X=x_i,\ \Lab=0}}}^2\\ &= p\brkt{2-e(x_i)-2\sqrt{1-\frac{e(x_i)}{2}\prt{1+h}}\sqrt{1-\frac{e(x_i)}{2}\prt{1-h}}} \ .
\end{align*}
We then sum the two results together:
\begin{subequations}
\allowdisplaybreaks
\begin{align}
\mathcal{H}^2\prt{\P_b, \P_{b^\prime}} &= \frac{p}{2}\brkt{2 - e(x_i)\sqrt{1-h^2} - 2\sqrt{1 - e(x_i) + \frac{e(x_i)^2}{4}\prt{1-h^2}}} \nonumber \\
&= p \brkt{1 - \frac{e(x_i)}{2}\sqrt{1-h^2} - \sqrt{\prt{1 - \frac{e(x_i)}{2}\sqrt{1-h^2}}^2 - e(x_i)\prt{1-\sqrt{1-h^2}}}} \nonumber \\
&= p \brkt{1 - \frac{e(x_i)}{2}\sqrt{1-h^2}} \brkt{1-\sqrt{1-\frac{e(x_i)\prt{1-\sqrt{1-h^2}}}{\brkt{1 - \frac{e(x_i)}{2}\sqrt{1-h^2}}^2}}} \\
&\leq \frac{p\ e(x_i)\prt{1-\sqrt{1-h^2}}}{1 - \frac{e(x_i)}{2}\sqrt{1-h^2}} \\
&\leq 2\, p\, e(x_i)\, h^2 
\end{align}
\end{subequations}
In the above calculation, we applied the inequality $1-\sqrt{1-h^2}\leq h^2$ for $h^2 \in [0,1]$.
\end{proof}


%
%
%
%
%
\bibliography{biblio}

\begin{thebibliography}{31}
\providecommand{\natexlab}[1]{#1}
\providecommand{\url}[1]{\texttt{#1}}
\expandafter\ifx\csname urlstyle\endcsname\relax
  \providecommand{\doi}[1]{doi: #1}\else
  \providecommand{\doi}{doi: \begingroup \urlstyle{rm}\Url}\fi

\bibitem[Bekker and Davis(2018)]{bekker_estimating_2018}
J.~Bekker and J.~Davis.
\newblock Estimating the class prior in positive and unlabeled data through
  decision tree induction.
\newblock In \emph{Thirty-{Second} {AAAI} {Conference} on {Artificial}
  {Intelligence}}, 2018.

\bibitem[Bekker and Davis(2020)]{Bekker2020Apr}
J.~Bekker and J.~Davis.
\newblock {Learning from positive and unlabeled data: a survey}.
\newblock \emph{Mach. Learn.}, 109\penalty0 (4):\penalty0 719--760, Apr 2020.
\newblock ISSN 1573-0565.
\newblock \doi{10.1007/s10994-020-05877-5}.

\bibitem[Bekker et~al.(2018)Bekker, Robberechts, and Davis]{Bekker2018Sep}
J.~Bekker, P.~Robberechts, and J.~Davis.
\newblock {Beyond the Selected Completely At Random Assumption for Learning
  from Positive and Unlabeled Data}.
\newblock \emph{arXiv}, Sep 2018.
\newblock URL \url{https://arxiv.org/abs/1809.03207}.

\bibitem[Blanchard et~al.(2010)Blanchard, Lee, and Scott]{blanchard2010semi}
G.~Blanchard, G.~Lee, and C.~Scott.
\newblock Semi-supervised novelty detection.
\newblock \emph{The Journal of Machine Learning Research}, 11:\penalty0
  2973--3009, 2010.

\bibitem[Bousquet et~al.(2003)Bousquet, Boucheron, and Lugosi]{Bousquet2003Feb}
O.~Bousquet, S.~Boucheron, and G.~Lugosi.
\newblock {Introduction to Statistical Learning Theory}.
\newblock \emph{SpringerLink}, pages 169--207, Feb 2003.
\newblock \doi{10.1007/978-3-540-28650-9_8}.

\bibitem[Cannings et~al.(2020)Cannings, Fan, and Samworth]{Cannings}
T.~I. Cannings, Y.~Fan, and R.~J. Samworth.
\newblock {Classification with imperfect training labels}.
\newblock \emph{Biometrika}, 107\penalty0 (2):\penalty0 311--330, 04 2020.
\newblock ISSN 0006-3444.
\newblock \doi{10.1093/biomet/asaa011}.
\newblock URL \url{https://doi.org/10.1093/biomet/asaa011}.

\bibitem[Chen et~al.(2020)Chen, Chen, Chen, Yuan, Gong, Chen, and
  Wang]{Chen2020Jun}
X.~Chen, W.~Chen, T.~Chen, Y.~Yuan, C.~Gong, K.~Chen, and Z.~Wang.
\newblock {Self-PU: Self Boosted and Calibrated Positive-Unlabeled Training}.
\newblock \emph{arXiv}, Jun 2020.
\newblock URL \url{https://arxiv.org/abs/2006.11280}.

\bibitem[Coudray et~al.(2021)Coudray, Bristiel, Dinis, Keribin, and
  Pamphile]{coudray:hal-03483277}
O.~Coudray, P.~Bristiel, M.~Dinis, C.~Keribin, and P.~Pamphile.
\newblock {Fatigue Data-Based Design: statistical methods for the
  identification of critical zones}.
\newblock In \emph{{SIA Simulation Num{\'e}rique}}, Online, France, Apr. 2021.
\newblock URL \url{https://hal.inria.fr/hal-03483277}.

\bibitem[Du~Plessis and Sugiyama(2014)]{du2014class}
M.~C. Du~Plessis and M.~Sugiyama.
\newblock Class prior estimation from positive and unlabeled data.
\newblock \emph{IEICE TRANSACTIONS on Information and Systems}, 97\penalty0
  (5):\penalty0 1358--1362, 2014.

\bibitem[Du~Plessis et~al.(2014)Du~Plessis, Niu, and Sugiyama]{Plessis2014Jan}
M.~C. Du~Plessis, G.~Niu, and M.~Sugiyama.
\newblock {Analysis of learning from positive and unlabeled data}.
\newblock \emph{Advances in Neural Information Processing Systems}, 1:\penalty0
  703--711, Jan 2014.
\newblock ISSN 1049-5258.
\newblock URL
  \url{https://www.researchgate.net/publication/288066387_Analysis_of_learning_from_positive_and_unlabeled_data}.

\bibitem[Elkan and Noto(2008)]{elkan_learning_2008}
C.~Elkan and K.~Noto.
\newblock Learning classifiers from only positive and unlabeled data.
\newblock In \emph{Proceeding of the 14th {ACM} {SIGKDD} international
  conference on {Knowledge} discovery and data mining - {KDD} 08}, page 213,
  Las Vegas, Nevada, USA, 2008. ACM Press.
\newblock ISBN 978-1-60558-193-4.
\newblock \doi{10.1145/1401890.1401920}.
\newblock URL \url{http://dl.acm.org/citation.cfm?doid=1401890.1401920}.

\bibitem[Ferretti et~al.(2014)Ferretti, Errecalde, Anderka, and
  Stein]{Ferretti2014Sep}
E.~Ferretti, M.~L. Errecalde, M.~Anderka, and B.~Stein.
\newblock {On the Use of Reliable-Negatives Selection Strategies in the PU
  Learning Approach for Quality Flaws Prediction in Wikipedia}.
\newblock \emph{2014 25th International Workshop on Database and Expert Systems
  Applications}, pages 211--215, Sep 2014.
\newblock ISSN 2378-3915.
\newblock \doi{10.1109/DEXA.2014.52}.

\bibitem[Fusilier~Donato et~al.(2015)Fusilier~Donato,
  y.~G{\ifmmode\acute{o}\else\'{o}\fi}mezManuel, RossoPaolo, and
  CabreraRafael]{FusilierDonato2015Jul}
H.~Fusilier~Donato, M.~y.~G{\ifmmode\acute{o}\else\'{o}\fi}mezManuel,
  RossoPaolo, and G.~CabreraRafael.
\newblock {Detecting positive and negative deceptive opinions using
  PU-learning}.
\newblock \emph{Information Processing and Management: an International
  Journal}, Jul 2015.
\newblock URL \url{https://dl.acm.org/doi/10.1016/j.ipm.2014.11.001}.

\bibitem[Gong et~al.(2021)Gong, Wang, Liu, Han, You, Yang, and
  Tao]{gong_instance-dependent_2021}
C.~Gong, Q.~Wang, T.~Liu, B.~Han, J.~J. You, J.~Yang, and D.~Tao.
\newblock Instance-{Dependent} {Positive} and {Unlabeled} {Learning} with
  {Labeling} {Bias} {Estimation}.
\newblock \emph{IEEE Transactions on Pattern Analysis and Machine
  Intelligence}, pages 1--1, 2021.
\newblock ISSN 1939-3539.
\newblock \doi{10.1109/TPAMI.2021.3061456}.
\newblock Conference Name: IEEE Transactions on Pattern Analysis and Machine
  Intelligence.

\bibitem[He et~al.(2020)He, Pan, Hong, Cheng, Chan, Liu, and
  Guizani]{He2020Feb}
D.~He, M.~Pan, K.~Hong, Y.~Cheng, S.~Chan, X.~Liu, and N.~Guizani.
\newblock {Fake Review Detection Based on PU Learning and Behavior Density}.
\newblock \emph{IEEE Network}, 34\penalty0 (4):\penalty0 298--303, Feb 2020.
\newblock ISSN 1558-156X.
\newblock \doi{10.1109/MNET.001.1900542}.

\bibitem[He et~al.(2018)He, Liu, Webb, and Tao]{He2018Aug}
F.~He, T.~Liu, G.~I. Webb, and D.~Tao.
\newblock {Instance-Dependent PU Learning by Bayesian Optimal Relabeling}.
\newblock \emph{arXiv}, Aug 2018.
\newblock URL \url{https://arxiv.org/abs/1808.02180}.

\bibitem[Jain et~al.(2016)Jain, White, and Radivojac]{jain_estimating_2016}
S.~Jain, M.~White, and P.~Radivojac.
\newblock Estimating the class prior and posterior from noisy positives and
  unlabeled data.
\newblock \emph{Advances in neural information processing systems},
  29:\penalty0 2693--2701, 2016.

\bibitem[Jiang et~al.(2018)Jiang, Haihong, Song, and Zhang]{Jiang2018Nov}
Y.~Jiang, E.~Haihong, M.~Song, and K.~Zhang.
\newblock {Research and Application of Newborn Defects Prediction Based on
  Spark and PU-learning}.
\newblock \emph{2018 5th IEEE International Conference on Cloud Computing and
  Intelligence Systems (CCIS)}, pages 657--663, Nov 2018.
\newblock \doi{10.1109/CCIS.2018.8691252}.

\bibitem[Li et~al.(2014)Li, Chen, Liu, Wei, and Shao]{Li2014Dec}
H.~Li, Z.~Chen, B.~Liu, X.~Wei, and J.~Shao.
\newblock {Spotting Fake Reviews via Collective Positive-Unlabeled Learning}.
\newblock \emph{2014 IEEE International Conference on Data Mining}, pages
  899--904, Dec 2014.
\newblock ISSN 2374-8486.
\newblock \doi{10.1109/ICDM.2014.47}.

\bibitem[Li and Liu(2003)]{li_learning_2003}
X.~Li and B.~Liu.
\newblock Learning to classify texts using positive and unlabeled data.
\newblock In \emph{{IJCAI}}, volume~3, pages 587--592. Citeseer, 2003.
\newblock Issue: 2003.

\bibitem[Liu et~al.(2002)Liu, Lee, Yu, and Li]{Liu2002}
B.~Liu, W.~S. Lee, P.~S. Yu, and X.~Li.
\newblock {Partially Supervised Classification of Text Documents}.
\newblock \emph{undefined}, 2002.

\bibitem[Liu et~al.(2003)Liu, Dai, Li, Lee, and Yu]{liu2003building}
B.~Liu, Y.~Dai, X.~Li, W.~S. Lee, and P.~S. Yu.
\newblock Building text classifiers using positive and unlabeled examples.
\newblock In \emph{Third IEEE International Conference on Data Mining}, pages
  179--186. IEEE, 2003.

\bibitem[Lugosi(2002)]{lugosi2002}
G.~Lugosi.
\newblock Pattern classification and learning theory.
\newblock In \emph{Principles of nonparametric learning}, pages 1--56.
  Springer, 2002.

\bibitem[Luo et~al.(2018)Luo, Cheng, Liu, and Jiang]{Luo2018Oct}
Y.~Luo, S.~Cheng, C.~Liu, and F.~Jiang.
\newblock {PU Learning in Payload-based Web Anomaly Detection}.
\newblock \emph{2018 Third International Conference on Security of Smart
  Cities, Industrial Control System and Communications (SSIC)}, pages 1--5, Oct
  2018.
\newblock \doi{10.1109/SSIC.2018.8556662}.

\bibitem[Massart and
  N{\ifmmode\acute{e}\else\'{e}\fi}d{\ifmmode\acute{e}\else\'{e}\fi}lec(2006)]{Massart2006Oct}
P.~Massart and
  {\ifmmode\acute{E}\else\'{E}\fi}.~N{\ifmmode\acute{e}\else\'{e}\fi}d{\ifmmode\acute{e}\else\'{e}\fi}lec.
\newblock {Risk bounds for statistical learning}.
\newblock \emph{Annals of Statistics}, 34\penalty0 (5), Oct 2006.
\newblock ISSN 0090-5364.
\newblock \doi{10.1214/009053606000000786}.

\bibitem[Nikdelfaz and Jalili(2018)]{Nikdelfaz2018May}
O.~Nikdelfaz and S.~Jalili.
\newblock {Disease genes prediction by HMM based PU-learning using gene
  expression profiles}.
\newblock \emph{J. Biomed. Inf.}, 81:\penalty0 102--111, May 2018.
\newblock ISSN 1532-0464.
\newblock \doi{10.1016/j.jbi.2018.03.006}.

\bibitem[Ramaswamy et~al.(2016)Ramaswamy, Scott, and
  Tewari]{ramaswamy_mixture_2016}
H.~Ramaswamy, C.~Scott, and A.~Tewari.
\newblock Mixture proportion estimation via kernel embeddings of distributions.
\newblock In \emph{International conference on machine learning}, pages
  2052--2060. PMLR, 2016.

\bibitem[Vapnik(1999)]{vapnik1999}
V.~Vapnik.
\newblock \emph{The nature of statistical learning theory}.
\newblock Springer science \& business media, 1999.

\bibitem[Yang et~al.(2012)Yang, Li, Mei, Kwoh, and
  Ng]{10.1093/bioinformatics/bts504}
P.~Yang, X.-L. Li, J.-P. Mei, C.-K. Kwoh, and S.-K. Ng.
\newblock {Positive-unlabeled learning for disease gene identification}.
\newblock \emph{Bioinformatics}, 28\penalty0 (20):\penalty0 2640--2647, 08
  2012.
\newblock ISSN 1367-4803.
\newblock \doi{10.1093/bioinformatics/bts504}.
\newblock URL \url{https://doi.org/10.1093/bioinformatics/bts504}.

\bibitem[Yang et~al.(2014)Yang, Li, Chua, Kwoh, and Ng]{Yang2014May}
P.~Yang, X.~Li, H.-N. Chua, C.-K. Kwoh, and S.-K. Ng.
\newblock {Ensemble Positive Unlabeled Learning for Disease Gene
  Identification}.
\newblock \emph{PLoS One}, 9\penalty0 (5):\penalty0 e97079, May 2014.
\newblock ISSN 1932-6203.
\newblock \doi{10.1371/journal.pone.0097079}.

\bibitem[Yu(1997)]{Yu1997}
B.~Yu.
\newblock {Assouad, Fano, and Le Cam}.
\newblock \emph{SpringerLink}, pages 423--435, 1997.
\newblock \doi{10.1007/978-1-4612-1880-7_29}.

\end{thebibliography}

\end{document}